\documentclass [twoside,reqno,12pt] {amsart}
\usepackage[left=1in,right=1in,top=1in,bottom=1in]{geometry}

\usepackage[hidelinks]{hyperref}
\usepackage{amsfonts}
\usepackage{amssymb}
\usepackage{color}
\usepackage{graphics}
\usepackage{comment}

\newtheorem{thm}{Theorem}[section]
\newtheorem{cor}[thm]{Corollary}
\newtheorem{lem}[thm]{Lemma}
\newtheorem{prop}[thm]{Proposition}

\newtheorem{rem}[thm]{Remark}

\theoremstyle{definition}

\numberwithin{equation}{section}

\renewcommand{\Re}{\mathrm{Re}}
\renewcommand{\Im}{\mathrm{Im}}

\newcommand{\C}{\mathbb{C}}

\renewcommand{\div}{\operatorname{div}}

\newcommand{\R}{\mathbb{R}}

\newcommand{\scl}{\mathrm{scl}}

\def\tilde{\widetilde}
\def \bfo {\begin {eqnarray*} }
\def \efo {\end {eqnarray*} }
\def \ba {\begin {eqnarray*} }
\def \ea {\end {eqnarray*} }
\def \beq {\begin {eqnarray}}
\def \eeq {\end {eqnarray}}
\def \supp {\hbox{supp }}

\def \p {\partial}

\usepackage{empheq}

\def\tilde{\widetilde}
\def \bfo {\begin {eqnarray*} }
\def \efo {\end {eqnarray*} }
\def \ba {\begin {eqnarray*} }
\def \ea {\end {eqnarray*} }
\def \beq {\begin {eqnarray}}
\def \eeq {\end {eqnarray}}
\def \supp {\hbox{supp }}

\def \p {\partial}


\usepackage{color,comment}
\newcommand{\cF}{\mathcal{F}}
\newcommand{\cO}{\mathcal{O}}


\begin{document}
\title[Stability for biharmonic operator with first order perturbation]{Stability estimates for an inverse  boundary value problem for biharmonic operators with first order perturbation from partial data}

\author[Liu]{Boya Liu}
\address{B. Liu, Department of Mathematics\\
North Carolina State University\\ 
Raleigh, NC 27695, USA}
\email{bliu35@ncsu.edu}

\begin{abstract}
In this paper we study an inverse boundary value problem for the biharmonic operator with first order perturbation. Our geometric setting is that of a bounded simply connected domain in the Euclidean space of dimension three or higher. Assuming that the inaccessible portion of the boundary is flat, and we have knowledge of the Dirichlet-to-Neumann map on the complement, we prove logarithmic type stability estimates for both the first and the zeroth order perturbation of the biharmonic operator.
\end{abstract}
\maketitle

\section{Introduction and Statement of Results}

Let $\Omega \subseteq \R^n, n \geq 3$, be a bounded simply connected domain with smooth boundary $\p \Omega$. Consider a biharmonic operator with first order perturbation
\begin{equation}
\label{eq:def_biharmonic}
\mathcal{L}_{A,q} (x, D):= \Delta^2+A(x) \cdot D +q(x),
\end{equation}
where $D =i^{-1} \nabla$, $A$ is a complex-valued vector field, called the \textit{magnetic potential}, and $q$ is a complex-valued function, called the \textit{electric potential}. In practice, the operator $\mathcal{L}_{A,q}$ arises when we consider the equilibrium configuration of an elastic plate hinged along the boundary. In physics and geometry, higher order operators occur in the study of the Kirchhoff plate equation in the theory of elasticity, the continuum mechanics of buckling problems, and the study of the Paneitz-Branson operator in conformal geometry, see \cite{Ashbaugh,Gazzola_Grunau_Sweers,Meleshko}. We also refer readers to \cite{Campos,Nakamura_Uhlmann} for more on the elasticity model and perturbed biharmonic operators.

The operator $\mathcal{L}_{A,q}$ equipped with the domain
\[
\mathcal{D}(\mathcal{L}_{A,q}) = \{u \in H^4(\Omega): u|_{\partial \Omega} = (\Delta u)|_{\partial \Omega}= 0\}
\] 
is an unbounded closed operator on $L^2(\Omega)$ with purely discrete spectrum, see \cite[Chapter 11]{Grubb}. In this paper we shall make the following assumption about the eigenvalues of the operator $\mathcal{L}_{A,q}$.

\textbf{Assumption 1}: 
\label{asu:eigenvalue}
0 is not an eigenvalue of the perturbed biharmonic operator $	\mathcal{L}_{A,q}: \mathcal{D}(\mathcal{L}_{A,q}) \to L^2(\Omega)$.

The eigenvalues of $\mathcal{L}_{A,q}$ depend strongly on the coefficients $A$ and $q$. For instance, if the $L^\infty$-norms of $A$ and $q$ are strictly positive, then $\mathcal{L}_{A,q}$ satisfies Assumption \ref{asu:eigenvalue}.

Some properties of the eigenvalues and eigenfunctions of the polyharmonic operator $(-\Delta)^m$, $m\ge 2$, are discussed in \cite[Chapter 3]{Gazzola_Grunau_Sweers}. For the perturbed biharmonic operator $\mathcal{L}_{A,q}$, when $A=0$ and $q$ is real-valued and compactly supported in a ball of radius $R>0$,  the Dirichlet eigenvalues of the operator $\mathcal{L}_{0,q}$ are real-valued and bi-Lipschitz equivalent to $n^{4/3}$, see \cite[Lemma 3.6]{Li_Yao_Zhao}. To the best of our knowledge, the study of eigenvalue problems for $\mathcal{L}_{A,q}$ with $A\ne 0$ has not received any attention, and it is not within the scope of this paper.

Consider the boundary value problem with Navier boundary conditions
\begin{equation}
\label{eq:bvp}
\begin{cases}
\mathcal{L}_{A,q}u = 0 \quad \text{in} \quad \Omega, 
\\
u=f \quad \text{on} \quad \partial \Omega, 
\\
\Delta u=g \quad \text{on} \quad \partial \Omega,
\end{cases}
\end{equation}  
where $A \in W^{1, \infty}(\Omega, \C^n)$ and $q \in L^\infty(\Omega, \C)$.
Under Assumption \ref{asu:eigenvalue},  as explained in \cite{Krupchyk_Lassas_Uhlmann_bi_partial}, the boundary value problem \eqref{eq:bvp} has a unique solution $u\in H^4(\Omega)$ for any pair of functions $(f,g)\in H^{\frac{7}{2}}(\p \Omega)\times H^{\frac{3}{2}} (\p \Omega)$. Here and in what follows $H^s(\Omega)=\{U|_\Omega: U\in H^s(\R^n)\}$, $s\in \R$, is the standard $L^2$-based Sobolev space on $\Omega$, see \cite[Chapter 5]{Agranovich}. Also, throughout this paper we shall denote $H^{\alpha, \beta}(\partial \Omega)$ the product of two Sobolev spaces $H^\alpha(\partial \Omega)         \times H^\beta (\partial \Omega)$, equipped with the norm
\[
\|(f, g)\|_{H^{\alpha, \beta}(\partial \Omega)} = \|f\|_{H^\alpha(\partial \Omega)} + \|g\|_{H^\beta(\partial \Omega)}.
\]

Let us next describe the geometric framework considered in this paper. We assume that $\Omega \subseteq \{x=(x_1, \dots, x_n)\in \R^n: x_n<0\}$, $n\ge 3$. We split the boundary $\p \Omega$ in two non-empty subsets $\Gamma_0 := \p \Omega \cap \{x\in \R^n: x_n=0\}$, called the \textit{inaccessible part}, and its complement $\Gamma=\p \Omega \setminus \Gamma_0$. That is, the inaccessible part of the boundary is flat. This geometric setting was first considered in \cite{Isakov_07} in the context of inverse problems.

We define the partial Dirichlet-to-Neumann map associated with the boundary value problem \eqref{eq:bvp} on the open set $\Gamma \subset \p \Omega$
\color{black}
as
\begin{equation}
\label{eq:Cauchy_data_hyp}
\begin{aligned}
\Lambda_{A,q}^{\Gamma}: H^{\frac{7}{2}}(\Gamma)\times H^{\frac{3}{2}} (\Gamma)\to H^{\frac{5}{2}}(\Gamma)\times H^{\frac{1}{2}}(\Gamma), \quad \Lambda_{A,q}^{\Gamma}(f,g)=(\p_\nu u|_{\Gamma}, \p_\nu (\Delta u)|_{\Gamma}),
\end{aligned}
\end{equation}
where $\supp f, \: \supp g \subset \Gamma$, and $\nu$ is the outer unit normal of $\p \Omega$. Let us also define the norm of $\Lambda_{A, q}^\Gamma$ by
\[
\|\Lambda_{A, q}^\Gamma\| := \sup \{\|\Lambda_{A, q}^\Gamma(f, g)\|_{H^{\frac{5}{2},\frac{1}{2}}(\Gamma)}: \|(f, g)\|_{H^{\frac{7}{2}, \frac{3}{2}}(\Gamma)} = 1\}.
\]

The purpose of this paper is to establish log-type stability estimates for the magnetic potential $A$ and electric potential $q$ from the partial Dirichlet-to-Neumann map $\Lambda_{A,q}^{\Gamma}$. The corresponding uniqueness result was established in \cite[Theorem 1.4]{Yang}. 

The study of inverse boundary value problems concerning the first order perturbation of the polyharmonic operator $(-\Delta)^m$, $m\ge 2$, was initiated in \cite{Krupchyk_Lassas_Uhlmann_poly}, where  unique identification results from the Dirichlet-to-Neumann map were obtained. We highlight that unique recovery of the first order perturbation appearing in higher order elliptic operators differs from analogous problems for second order operators, such as the magnetic Schr\"odinger operator. In the latter case, due to the gauge invariance of boundary measurements, one can only hope to uniquely recover $dA$, which is the exterior derivative of the first order perturbation $A$, see for instance \cite{Krup_Lass_Uhl_magSchr_euc,Liu_2018,NakSunUlm_1995,Sun_95} and the references therein among the extensive literature in this direction. Here, if we view $A$ as a 1-form, then $dA$ is a 2-form given by the formula
\begin{equation}
\label{eq:def_mag_field}
dA=\sum_{1\le j<k\le n}(\p_{x_j}A_k-\p_{x_k}A_j)dx_j\wedge dx_k.
\end{equation} 
For the polyharmonic operator, which is of order $2m$, a gauge invariance occurs when we attempt to recover perturbations of order $2m-1$ from boundary measurements, see a very recent result \cite{Sahoo_Salo}.

Before stating the main results of this paper, let us introduce the following admissible sets for the coefficients $A$ and $q$ of the operator $\mathcal{L}_{A,q}$. Given a constant $M>0$, we define the class of admissible magnetic potentials $\mathcal{A}(\Omega, M)$ by 
\[
\mathcal{A}(\Omega,M)=\{A\in W^{1,\infty}(\Omega)\cap \mathcal{E}'(\overline{\Omega}): \|A\|_{H^s(\Omega)}\le M, s>\frac{n}{2}+1\}.
\]
Notice that we assumed \textit{a priori} bounds for the $H^s$-norm of $A$ instead of the natural $W^{1,\infty}$-norm in the definition above. We shall explain the reason in Subsection \ref{ssec:proof}.  Let us also define the class of admissible electric potentials $\mathcal{Q}(\Omega, M)$ in a similar fashion by 
\[
\mathcal{Q}(\Omega,M)=\{q\in L^\infty(\Omega): \|q\|_{L^\infty(\Omega)}\leq M\}.
\]

The main results of this paper are as follows. First, we state a log-type stability estimate for the magnetic potential $A$.
%
\begin{thm}
\label{thm:estimate_A_Linfty}
Let $\Omega \subseteq \{x\in \R^n: x_n<0\}$, $n\ge 3$, be a bounded simply connected domain with smooth boundary $\p \Omega$. Assume that $\Gamma_0 = \p \Omega \cap \{x\in \R^n: x_n=0\}$ is nonempty, and let $\Gamma=\p \Omega \setminus \Gamma_0$. Let $M \geq 0$, and let $A^{(j)}\in \mathcal{A}(\Omega, M)$ and $q_j\in \mathcal{Q}(\Omega, M)$, $j=1, 2$.  Suppose that Assumption \ref{asu:eigenvalue} holds for both operators $\mathcal{L}_{A^{(1)},q_1}$ and $\mathcal{L}_{A^{(2)},q_2}$. Then there exists a constant $C=C(\Omega, n, M,s)>0$ such that 
\begin{equation}
\label{eq:estimate_A_Linfty}
\|A^{(2)} - A^{(1)}\|_{L^\infty(\Omega)}
\leq 
C\left(\|\Lambda_{A^{(1)}, q_1}^\Gamma-\Lambda_{A^{(2)}, q_2}^\Gamma\|^{\frac{\eta^2}{2(1+s)^2}}+ \big|\log\|\Lambda_{A^{(1)}, q_1}^\Gamma - \Lambda_{A^{(2)}, q_2}^\Gamma\| \big|^{-\theta_1}\right),
\end{equation}
where $\eta=\frac{1}{2}(s-\frac{n}{2})$ and $0<\theta_1=\frac{\alpha^2 \eta^4}{(n+2)^2(1+s)^4}<1$.
\end{thm}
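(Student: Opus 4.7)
The plan is to prove Theorem~\ref{thm:estimate_A_Linfty} via the standard three-step pipeline for logarithmic stability in partial-data inverse problems. First, I would construct complex geometric optics (CGO) solutions for the operators $\mathcal{L}_{A^{(j)},q_j}$, $j=1,2$, adapted to a reflected extension of $\Omega$ across the flat face $\Gamma_0$. Second, I would test the difference of the Dirichlet-to-Neumann maps against these CGOs to extract Fourier information about $A:=A^{(2)}-A^{(1)}$. Third, I would combine this with the a priori bound $\|A^{(j)}\|_{H^s}\le M$ and a Sobolev-type frequency decomposition to pass from a frequency-localised Fourier control to an $L^\infty$ estimate, tuning parameters so that the exponential loss inherent to partial data produces only a logarithmic loss in stability.

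For the reflection, set $\widetilde\Omega = \Omega \cup \Gamma_0 \cup \Omega^*$, where $\Omega^*$ is the image of $\Omega$ under $x\mapsto x^* := (x_1,\dots,x_{n-1},-x_n)$. Extending the coefficients $A^{(j)}$ and $q_j$ by reflection to $\widetilde\Omega$ keeps them in their admissibility class. I would then build CGOs $u_j = e^{x\cdot\zeta_j/h}(a_j+r_j)$ solving $\mathcal{L}_{A^{(j)},q_j}u_j=0$ on $\widetilde\Omega$, with $\zeta_j\in\mathbb{C}^n$ satisfying $\zeta_j\cdot\zeta_j=0$ and $|\mathrm{Re}\,\zeta_j|\sim 1$, following the biharmonic CGO framework used in \cite{Krupchyk_Lassas_Uhlmann_bi_partial,Yang}. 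The amplitude $a_j$ solves a $\bar\partial$-type transport equation driven by $A^{(j)}$, and the remainder satisfies $\|r_j\|_{H^4(\widetilde\Omega)} \lesssim h^{\alpha}$ for some $\alpha>0$. Choosing the phases $\zeta_j$ invariant under $x\mapsto x^*$, the CGOs acquire matched Cauchy data on $\Gamma_0$, so that in the Alessandrini-type identity only the boundary contribution on $\Gamma$ survives.

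Starting from the identity
\[
\int_{\widetilde\Omega}\bigl[(A^{(2)}-A^{(1)})\cdot Du_1 + (q_2-q_1)u_1\bigr]\overline{v_2}\,dx \;=\; \mathcal{B}_\Gamma(u_1,v_2),
\]
where $\mathcal{B}_\Gamma$ is bounded by $\|\Lambda_{A^{(1)},q_1}^\Gamma - \Lambda_{A^{(2)},q_2}^\Gamma\|$ times the $H^{7/2,3/2}$-norms of the CGO boundary data, I would insert the Ansatz with $(\zeta_1+\overline{\zeta_2})/h = i\xi$ for $\xi$ tangent to $\{x_n=0\}$, and vary the leading amplitudes to isolate the tangential and normal components of $\widehat A$ separately. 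Tracking the polynomial-in-$|\xi|$ growth coming from the amplitude and the exponential-in-$\rho/h$ growth coming from the boundary pairing yields
\[
|\widehat A(\xi)| \;\le\; C\bigl(e^{C\rho/h}\|\Lambda_{A^{(1)},q_1}^\Gamma - \Lambda_{A^{(2)},q_2}^\Gamma\| + h^{\alpha}\bigr), \qquad |\xi|\le\rho.
\]
Because the biharmonic operator exhibits no first-order gauge invariance at leading order, the full vector $A$ is accessible here, not merely $dA$ as in the magnetic Schr\"odinger case.

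To conclude, I would split $\|A\|_{L^\infty} \le \int_{|\xi|\le R}|\widehat A(\xi)|\,d\xi + \int_{|\xi|>R}|\widehat A(\xi)|\,d\xi$, bounding the low-frequency piece by Cauchy--Schwarz and the Fourier estimate above, and the high-frequency piece by $R^{n/2-s}\|A\|_{H^s}\lesssim R^{n/2-s}M$, which uses the a priori $H^s$ bound with $s>\frac{n}{2}+1$. Optimising successively in $R$, $\rho$, and $h$ then produces the two contributions in \eqref{eq:estimate_A_Linfty}: the H\"older-type term $\|\Lambda_{A^{(1)},q_1}^\Gamma-\Lambda_{A^{(2)},q_2}^\Gamma\|^{\eta^2/(2(1+s)^2)}$ from the low-frequency/Sobolev balance when the DN map difference is moderate, and the logarithmic term from neutralising the exponential barrier $e^{C\rho/h}$ when it is small. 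The main obstacle I anticipate is the bookkeeping of this fourfold composition of interpolations, which must chain the $H^s\to L^\infty$ gap $\eta=\tfrac12(s-\tfrac n2)$ with the Fourier--Sobolev passage and the CGO amplitude growth in $|\xi|$ so that the exponents land precisely at $\eta^2/(2(1+s)^2)$ and $\alpha^2\eta^4/((n+2)^2(1+s)^4)$, together with the simultaneous recovery of the tangential and normal components of $A$ under the reflection-symmetric CGO Ansatz.
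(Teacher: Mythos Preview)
Your proposal contains a genuine gap at the step where you claim that ``the full vector $A$ is accessible here, not merely $dA$.'' When you insert the CGOs into the integral identity, the leading contribution is $(\mu^{(1)}-i\mu^{(2)})\cdot\int \tilde A\,e^{-ix\cdot\xi}\,dx$, where the phase vectors $\mu^{(1)},\mu^{(2)}$ are by construction \emph{orthogonal} to $\xi$ (this orthogonality is forced by $\zeta_j\cdot\zeta_j=0$ together with $(\zeta_2-\overline{\zeta_1})/h=-\xi$). Hence you only access $\mu\cdot\widehat{\tilde A}(\xi)$ for $\mu\perp\xi$, which encodes exactly $\widehat{d\tilde A}(\xi)$ and nothing more. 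The absence of gauge invariance for the biharmonic operator is a statement about uniqueness, not about what the leading CGO asymptotics deliver; the mechanism by which $A$ (rather than $dA$) is recovered is a separate, second step. In the paper this is done via the Hodge decomposition $A=A^{\mathrm{sol}}+\nabla\varphi$: the solenoidal part is controlled by $\|dA\|_{L^\infty}$ through Morrey's inequality, while $\nabla\varphi$ requires rerunning the CGO argument with a \emph{different} amplitude ($a_2$ satisfying $(\mu^{(1)}-i\mu^{(2)})\cdot\nabla a_2=1$ rather than $a_2=1$) and an integration by parts that converts $\mu\cdot\nabla\varphi$ into $-\varphi$. Your plan to ``vary the leading amplitudes to isolate the tangential and normal components of $\widehat A$'' is not enough without this structure, and it is precisely this two-pass argument that generates the nested exponents $\eta^2/(1+s)^2$ and $\alpha^2\eta^4/((n+2)^2(1+s)^4)$.

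Two smaller issues also need attention. First, the transport equation for the amplitude in the biharmonic CGO is $(\zeta^{(0)}\cdot\nabla)^2a=0$ and does \emph{not} involve $A$; your $\bar\partial$-type equation ``driven by $A^{(j)}$'' is the magnetic Schr\"odinger picture, not this one. Second, the reflection does not make the phases invariant under $x\mapsto x^*$; the solutions are built as $\tilde u-\tilde u^*$, and the cross terms $e^{-ix\cdot\xi_\pm}$ with $|\xi_\pm|\sim h^{-1}$ survive in the identity. Controlling them requires a quantitative Riemann--Lebesgue argument (Lemma~\ref{lem:Riemann_Lebesgue}) and a further parameter $\varepsilon$, which your Fourier bound $|\widehat A(\xi)|\le C(e^{C\rho/h}\|\Lambda\|+h^\alpha)$ omits. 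This extra layer is responsible for the factor $(n+2)$ in the final exponent $\theta_1$.
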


We shall also establish a log-type estimate for the electrical potential $q$, which is given in the following theorem.
\begin{thm}
\label{thm:estimate_q_hyp}
Under the same hypotheses as in Theorem \ref{thm:estimate_A_Linfty}, there exists a constant $C=C(\Omega, n, M,s)>0$ such that
\begin{equation}
\label{eq:estimate_q_hyp}
\|q_1 - q_2\|_{H^{-1}(\Omega)} \le
C\left(\|\Lambda_{A^{(1)}, q_1}^\Gamma-\Lambda_{A^{(2)}, q_2}^\Gamma\|^{\frac{\eta^2}{2(1+s)^2}}
+
\big|\log \|\Lambda_{A^{(1)}, q_1}^\Gamma-\Lambda_{A^{(2)}, q_2}^\Gamma\|\big|^{-\theta_2}\right),
\end{equation}
where $\eta=\frac{1}{2}(s-\frac{n}{2})$ and $0<\theta_2:=\frac{2\alpha^2\eta^4}{(n+2)^3(1+s)^4}<1$.
\end{thm}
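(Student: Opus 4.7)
The strategy I would follow is the classical complex geometric optics (CGO) and Alessandrini-identity scheme, reducing the $H^{-1}$-stability of $q_1-q_2$ to a pointwise Fourier bound that is then assembled through a high/low frequency split. The already-established Theorem \ref{thm:estimate_A_Linfty} is used at the end to dispose of the first-order contribution in the identity.

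For each $\xi\in\R^n$ I would construct CGO solutions
\[
u_1(x)=e^{ix\cdot\zeta_1}(1+r_1(x))\in H^4(\Omega),\qquad u_2(x)=e^{ix\cdot\zeta_2}(1+r_2(x))\in H^4(\Omega),
\]
to $\mathcal{L}_{A^{(1)},q_1}u_1=0$ and to the formal adjoint equation $\mathcal{L}_{A^{(2)},q_2}^{*}u_2=0$, with $\zeta_j\in\C^n$, $\zeta_j\cdot\zeta_j=0$, $|\zeta_j|\sim\tau\gg 1$, and phases chosen so that the leading term of $u_1\overline{u_2}$ equals $e^{-ix\cdot\xi}$. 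Carleman estimates for the conjugated biharmonic operator furnish remainders satisfying $\|r_j\|_{L^{2}(\Omega)}=O(\tau^{-1})$ and $\|Dr_j\|_{L^{2}(\Omega)}=O(1)$. The flatness of $\Gamma_0\subset\{x_n=0\}$ is exploited by choosing $\zeta_j$ with real $n$-th component and reflecting $u_j$ across this hyperplane, so that the contribution from $\Gamma_0$ in the Alessandrini identity drops out. The identity itself, obtained by integrating $\mathcal{L}_{A^{(2)},q_2}u_1\cdot\overline{u_2}$ by parts twice and using $\mathcal{L}_{A^{(1)},q_1}u_1=0$ together with $\mathcal{L}_{A^{(2)},q_2}^{*}u_2=0$, takes the schematic form
\[
\int_\Omega(A^{(2)}-A^{(1)})\cdot Du_1\,\overline{u_2}\,dx+\int_\Omega(q_2-q_1)\,u_1\overline{u_2}\,dx=\bigl\langle(\Lambda^{\Gamma}_{A^{(2)},q_2}-\Lambda^{\Gamma}_{A^{(1)},q_1})(f_1,g_1),(\overline{f_2},\overline{g_2})\bigr\rangle,
\]
with $(f_j,g_j)=(u_j|_\Gamma,\Delta u_j|_\Gamma)$. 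Inserting the CGOs and bounding the CGO trace norms by $e^{C\tau}$ gives
\[
\bigl|\widehat{(q_1-q_2)}(\xi)\bigr|\le C\bigl(e^{C\tau}\delta+\tau\|A^{(1)}-A^{(2)}\|_{L^{\infty}(\Omega)}+\tau^{-1}\bigr),\qquad|\xi|\le\tau,
\]
where $\delta=\|\Lambda^{\Gamma}_{A^{(1)},q_1}-\Lambda^{\Gamma}_{A^{(2)},q_2}\|$; the factor $\tau$ multiplying $\|A^{(1)}-A^{(2)}\|_{L^\infty}$ arises from the leading behaviour $Du_1\sim\zeta_1 e^{ix\cdot\zeta_1}$.

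At this stage Theorem \ref{thm:estimate_A_Linfty} is invoked to replace $\|A^{(1)}-A^{(2)}\|_{L^{\infty}(\Omega)}$ by its own log-type bound. The $H^{-1}$-estimate then follows from the frequency split
\[
\|q_1-q_2\|_{H^{-1}(\Omega)}^{2}\le\int_{|\xi|\le\rho}\frac{|\widehat{(q_1-q_2)}(\xi)|^{2}}{1+|\xi|^{2}}\,d\xi+\int_{|\xi|>\rho}\frac{|\widehat{(q_1-q_2)}(\xi)|^{2}}{1+|\xi|^{2}}\,d\xi,
\]
bounding the high-frequency tail by $C\rho^{-2}\|q_1-q_2\|_{L^2(\Omega)}^2\le C\rho^{-2}$ via the a priori $L^\infty$-bound, and inserting the pointwise Fourier bound (with $\tau\ge\rho$) in the low-frequency part. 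A joint optimization of $\tau$ and $\rho$ as functions of $\delta$ then yields \eqref{eq:estimate_q_hyp}; the extra factor $2/(n+2)$ in $\theta_2$ relative to $\theta_1$ encodes precisely the volume loss from integrating $|\widehat{\cdot}|^{2}$ over a ball of radius $\rho$ in $\R^n$.

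The principal difficulty will be the compounded optimization in this final step: the Carleman growth $e^{C\tau}$, the CGO remainder rate $\tau^{-1}$, the cutoff $\rho$, and the log-type bound supplied by Theorem \ref{thm:estimate_A_Linfty} must all be reconciled to reproduce the specific exponent $\theta_2=\tfrac{2\alpha^{2}\eta^{4}}{(n+2)^{3}(1+s)^{4}}$. A secondary technical point is carrying out the reflection across $\Gamma_0$ in a manner compatible with the fourth-order Navier boundary data of \eqref{eq:bvp}, so that both $u|_{\Gamma_0}$ and $(\Delta u)|_{\Gamma_0}$ are handled consistently by the symmetry.
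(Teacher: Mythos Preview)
Your overall architecture---Alessandrini identity, CGOs with reflection across $\{x_n=0\}$, Fourier bound, high/low split, then plug in Theorem~\ref{thm:estimate_A_Linfty}---matches the paper's. But there is a genuine gap in the Fourier bound step that, as written, prevents the argument from closing.

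The reflection $u\mapsto \tilde u-\tilde u^{*}$ is indeed what kills the $\Gamma_0$ boundary terms, but it has a cost inside the volume integral that you do not account for. When you expand $qu_2\overline v$ with the reflected CGOs you obtain four exponential pieces (cf.\ \eqref{eq:sum_diff_exp}): two of them, $e^{-ix\cdot\xi}$ and $e^{-ix^{*}\cdot\xi}$, combine after a change of variables into $\mathcal F(\tilde q)(\xi)$, but the two \emph{cross terms} carry the phases $e^{-ix\cdot\xi_{\pm}}$ with $\xi_{\pm}=(\xi',\pm \tfrac{2}{h}\sqrt{1-h^2|\xi|^2/4}\,|\xi'|/|\xi|)$, i.e.\ frequencies of size $\sim 1/h$. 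These give $\mathcal F(\tilde q)(\xi_{\pm})$, which are \emph{not} controlled by the CGO remainder estimate $\|r_j\|=O(h)$; they must be handled separately by a quantitative Riemann--Lebesgue lemma (Lemma~\ref{lem:Riemann_Lebesgue}), producing an extra contribution $C(e^{-\varepsilon^{2}|\xi'|^{2}/(\pi h^{2}|\xi|^{2})}+\varepsilon^{\alpha})$ in the Fourier bound. The H\"older exponent $\alpha\in(0,1)$ there is exactly the $\alpha$ that appears in $\theta_2$. Your displayed bound $|\widehat{(q_1-q_2)}(\xi)|\le C(e^{C\tau}\delta+\tau\|A\|_{L^\infty}+\tau^{-1})$ is what one gets in the full-data problem; for this partial-data setting it is missing the cross-term contribution, and without it the subsequent optimization cannot recover the stated exponent.

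Relatedly, the low-frequency integral is not a plain ball integral: after inserting the Riemann--Lebesgue term one must also integrate $e^{-\varepsilon^{2}|\xi'|^{2}/(\pi h^{2}|\xi|^{2})}$ over $E(\rho)=\{|\xi'|\le\rho,\ |\xi_n|\le\rho\}$, choose $\varepsilon=\sqrt h$, and carry out a polar-coordinate computation (cf.\ \eqref{eq:est_dA_exp_term}) to get an $h^{(n-1)/2}$ gain. Only then does the paper's two-step optimization (first $\rho=h^{-4\alpha\eta^{2}/((n+2)^{3}(1+s)^{2})}$, then $h$ chosen against $\delta$) yield $\theta_2=\tfrac{2\alpha^{2}\eta^{4}}{(n+2)^{3}(1+s)^{4}}$. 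Your explanation of the factor $2/(n+2)$ as pure ``volume loss'' is therefore incomplete: it also encodes the Riemann--Lebesgue parameter $\alpha$ already present in $\theta_1$.
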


If the electric potentials $q_1$ and $q_2$ pose additional regularity and a priori bounds, we have the following corollary of Theorem \ref{thm:estimate_q_hyp}.
\begin{cor}
\label{cor:estimate_q_Linfty}
Let $\Omega \subseteq \{x\in \R^n: x_n<0\}$, $n\ge 3$, be a bounded domain with smooth connected boundary. Assume that $\Gamma_0 := \p \Omega \cap \{x\in \R^n: x_n=0\}$ is nonempty, and let $\Gamma=\p \Omega \setminus \Gamma_0$. Let $M \geq 0$ and $s>\frac{n}{2}$. Let $A^{(j)}\in \mathcal{A}(\Omega, M)$, and let $q_j\in L^\infty(\Omega)$ be such that $\|q_j\|_{H^s(\Omega)}\le M$, $j=1, 2$.  Suppose that Assumption \ref{asu:eigenvalue} holds for both operators $\mathcal{L}_{A^{(1)},q_1}$ and $\mathcal{L}_{A^{(2)},q_2}$. Then there exists a constant $C=C(\Omega, n, M,s)>0$ such that
\begin{equation}
\label{eq:estimate_q_Linfty}
\|q_1 - q_2\|_{L^\infty(\Omega)} \le
C\bigg(\|\Lambda_{A^{(1)}, q_1}^\Gamma-\Lambda_{A^{(2)}, q_2}^\Gamma\|^{\frac{\eta^2}{2(1+s)^2}}
+
\big| \log  \|\Lambda_{A^{(1)}, q_1}^\Gamma-\Lambda_{A^{(2)}, q_2}^\Gamma\|\big|^{-\theta_2}\bigg)^{\frac{\eta}{1+s}}.
\end{equation}
Here the constants $\theta_2$ and $\eta$ are the same as in Theorem \ref{thm:estimate_q_hyp}.
\end{cor}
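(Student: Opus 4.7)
The plan is to deduce the $L^\infty$ bound from the $H^{-1}$ bound of Theorem \ref{thm:estimate_q_hyp} by a standard Sobolev interpolation argument, exploiting the a priori $H^s$ control that is now assumed on the electric potentials.

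First I would set $q := q_1 - q_2$ and note that the triangle inequality gives $\|q\|_{H^s(\Omega)} \le 2M$. Next, since $\Omega$ has smooth boundary, the Sobolev embedding $H^{n/2+\eta}(\Omega) \hookrightarrow L^\infty(\Omega)$ holds (recall $\eta = \tfrac{1}{2}(s - n/2) > 0$), so
\[
\|q\|_{L^\infty(\Omega)} \le C\, \|q\|_{H^{n/2+\eta}(\Omega)}
\]
for a constant $C = C(\Omega, n, s)$.

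The key step is the interpolation inequality for Sobolev norms on $\Omega$: for any intermediate regularity $t$ between $-1$ and $s$, one has
\[
\|q\|_{H^t(\Omega)} \le C\, \|q\|_{H^{-1}(\Omega)}^{1-\theta}\, \|q\|_{H^s(\Omega)}^{\theta}, \qquad t = -(1-\theta) + s\theta.
\]
Choosing $t = n/2 + \eta$ yields $\theta = \frac{n/2 + \eta + 1}{1+s}$ and, crucially, $1-\theta = \frac{s - n/2 - \eta}{1+s} = \frac{\eta}{1+s}$. Inserting the a priori bound $\|q\|_{H^s(\Omega)} \le 2M$ and combining with the Sobolev embedding above, I obtain
\[
\|q\|_{L^\infty(\Omega)} \le C(M, \Omega, n, s)\, \|q_1 - q_2\|_{H^{-1}(\Omega)}^{\eta/(1+s)}.
\]

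Finally, I would plug the estimate from Theorem \ref{thm:estimate_q_hyp} into the right-hand side and invoke subadditivity of $x \mapsto x^{\eta/(1+s)}$ (an exponent in $(0,1)$) to move the power inside the sum up to a multiplicative constant; this yields exactly the bound \eqref{eq:estimate_q_Linfty}. There is no substantial obstacle here, since the argument is a routine interpolation-plus-embedding reduction. The only point requiring mild care is justifying the interpolation inequality on the bounded domain $\Omega$, which follows by a Stein or Sobolev extension operator $E : H^s(\Omega) \to H^s(\R^n)$ (available because $\p\Omega$ is smooth), interpolation on $\R^n$, and restriction.
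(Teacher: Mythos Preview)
Your proposal is correct and matches the paper's approach exactly: the paper simply says that \eqref{eq:estimate_q_Linfty} follows from \eqref{eq:estimate_q_hyp} via the same Sobolev embedding plus $H^{-1}$--$H^s$ interpolation computation carried out in \eqref{eq:est_dA_Linfty}. One small remark: since the target estimate \eqref{eq:estimate_q_Linfty} already carries the exponent $\eta/(1+s)$ \emph{outside} the parentheses, your final subadditivity step is not actually needed.
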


\begin{rem}
To the best of our knowledge, estimates \eqref{eq:estimate_A_Linfty} and \eqref{eq:estimate_q_hyp} are the first stability estimates for partial data inverse boundary value problems for the biharmonic operator with first order perturbations. 
\end{rem}


\subsection{Previous literature}

The study of inverse problems concerning biharmonic or polyharmonic operators have attracted significant attentions in recent years. For the case that only the zeroth order perturbation is considered, unique identifiability results were obtained in \cite{Ikehata,Isakov_91}. If we extend our consideration to include first order perturbations, it was proved in \cite{Krupchyk_Lassas_Uhlmann_poly} that the Dirichlet-to-Neumann map determines sufficiently smooth first order perturbation of the polyharmonic operator uniquely. There are extensive subsequent efforts to recover the first order perturbation of the polyharmonic operator with lower regularity, see for instance  \cite{Assylbekov_16,Assylbekov_Iyer,Brown_Gauthier,Krupchyk_Uhlmann_poly_16} and the references therein. We remark that relaxing the regularity of coefficients is crucial in inverse problems, since it enables the imaging of rough medium.  In addition to the aforementioned results in the Euclidean setting, we refer readers to \cite{Assylbekov_Yang,Yan} for some uniqueness results in the setting of Riemannian manifolds. 

In all of the aforementioned results, boundary measurements are made on the entire boundary of a domain or a manifold. However, in many applications such as geophysics, it could be either impossible or extremely cost-consuming to perform measurements on the entire boundary of a medium. Hence, it is of great interest and significance to investigate partial data inverse problems, in which case measurements are made only on open subsets of the boundary. In the context of inverse boundary value problems for biharmonic operators, it was established in \cite{Krupchyk_Lassas_Uhlmann_bi_partial} that the partial Dirichlet-to-Neumann map, where the Dirichlet data is measured on the entire boundary but the Neumann data is measured on slightly more than half of the boundary, determines the first and the zeroth order perturbations of the biharmonic operator uniquely. Several partial data uniqueness results were obtained in \cite{Yang} to recover the first and the zeroth order perturbation of the biharmonic operator from the set of Cauchy data, in bounded domains and infinite slabs.

Since polyharmonic operators are of order $2m$, it is natural to consider recovering second or higher order perturbations from boundary measurements, and some progress have  been made in this direction. For instance, the authors of \cite{Bhattacharyya_Ghosh_19,Bhattacharyya_Ghosh_second_order} obtained unique determination results for perturbations of up to second order appearing in the polyharmonic operator. Beyond the second order perturbations, it was proved in \cite{Bhattacharyya_Krishnan_Sahoo_21} that the Dirichlet-to-Neumann map, with the Neumann data measured on roughly half of the boundary, determines anisotropic perturbations of up to order $m$ appearing in polyharmonic operators.  Also, recovery of multiple isotropic perturbations, given as either a function or a vector field, of a polyharmonic operator from partial boundary measurements was established in \cite{Ghosh_Krishnan}. We would like to emphasize that, to the best of our knowledge, unique determination of the first and the zeroth order perturbation of the polyharmonic operator from partial boundary measurements, where both the Dirichlet data and the Neumann data are measured only on open subsets of the boundary, remain an important open problem. The analogous problem for the magnetic Schr\"odinger operator was solved in \cite{Chung_142}.

Turning attention to the issue of stability, the majority of known literature for polyharmonic operators considers only the zeroth order perturbation. A log-type estimate was obtained in \cite{Choudhury_Krishnan} when measurements are made on the entire boundary. In the realm of partial data results, a log-type estimate was proved in \cite{Choudhury_Heck} when the inaccessible part of the boundary is contained in a hyperplane. Additionally, a log-log-type estimate was established in \cite{Choudhury_Krishnan} in the scenario that the Neumann data is measured on slightly more than half of the boundary. We remark that the stability estimates probed in \cite{Choudhury_Heck,Choudhury_Krishnan} indicate that the inverse problems considered are severely ill-posed. This phenomenon means that small errors in measurements may result in exponentially large errors in the reconstruction of the unknown potential, which makes it very challenging to design reconstruction algorithms with high resolution in practice. However, as observed numerically in \cite{Colton_Haddar_Piana}, if a frequency $k$ is introduced to the operator, then stability may improve to a H\"older type or a Lipschitz type when $k$ becomes large. For the perturbed biharmonic operator $\mathcal{L}_{A,q}$, with $A=0$, the author derived a H\"older-type stability estimate for the potential $q$ at high frequencies from partial boundary measurements in \cite{Liu_stability}, where the inaccessible part of the boundary is flat.

Once stability estimates are established for the zeroth order perturbation of the polyharmonic operator, the natural next step is to derive estimates for higher order perturbations. To the best of our knowledge, stability estimates for higher order perturbations of the biharmonic operator were obtained only when full data is considered. We refer readers to see a very recent result \cite{Ma_Liu_stability} for log-type stability estimates for the first and zeroth perturbations of the biharmonic operator $\mathcal{L}_{A,q}$, as well as \cite{Aroua_Bellassoued} for stability estimates concerning perturbations of up the second order of polyharmonic operators.

\subsection{Outline of the proof of Theorem \ref{thm:estimate_A_Linfty} and Theorem \ref{thm:estimate_q_hyp}}
\label{ssec:proof}

The general strategy of the proof follows from the methods introduced in \cite{Alessandrini} using complex geometric optics (CGO) solutions. To elaborate, we shall construct a solution $v\in H^4(\Omega)$ to the equation $\mathcal{L}_{A^{(1)},q_1}^\ast v=0$ in $\Omega$, as well as a solution $u_2\in H^4(\Omega)$ satisfying the equation $\mathcal{L}_{A^{(2)},q_2} u_2=0$ in $\Omega$. Here $\mathcal{L}_{A,q}^\ast =\mathcal{L}_{\overline{A},q-i\div \overline{A}}$ is the formal $L^2$-adjoint of $\mathcal{L}_{A,q}$. Due to the restrictions $\supp u|_{\p \Omega}, \supp (\Delta u)|_{\p \Omega} \subseteq \Gamma$ from the definition of the partial Dirichlet-to-Neumann map \eqref{eq:Cauchy_data_hyp}, we need to construct solutions $v$ and $u_2$ such that $v|_{\Gamma_0}=(\Delta v)|_{\Gamma_0}=0$ and $u_2|_{\Gamma_0}=(\Delta u_2)|_{\Gamma_0}=0$. We shall accomplish this by employing the reflection argument originated in \cite{Isakov_07}. This approach has been successfully implemented to solve many partial data inverse boundary value problems where the inaccessible part of the boundary is contained in a hyperplane or a sphere. We refer readers to see for instance \cite{Caro_11,Choudhury_Heck,Krup_Lass_Uhl_magSchr_euc,Liu_stability,Potenciano_Machado_17,Yang} and the references therein. We would like to mention that we utilized three mutually orthogonal vectors in $\R^n$ satisfying  \eqref{eq:condition_vectors} to construct CGO solutions.  This is only possible when the domain $\Omega$ has dimension $n\ge 3$. Hence, the techniques used in this paper are not applicable if $n=2$, see \cite{Nachman_88} for the methods to establish a uniqueness result for the Calder\'on problem in dimension two.

Another important component of the proof is the integral inequality \eqref{eq:int_id_est_hyp}, whose derivation requires knowledge of the partial Dirichlet-to-Neumann map. This integral inequality, in conjunction with  CGO solutions \eqref{eq:CGO_v_explicit_hyp} and \eqref{eq:CGO_u2_explicit_hyp}, results in the Fourier transform of the difference of magnetic fields $dA^{(2)}-dA^{(1)}$, as well as some error terms. We then apply an estimate for the remainder terms of CGO solutions  \eqref{eq:est_remainder} and a quantitative version of the Riemann-Lebesgue lemma (Lemma \ref{lem:Riemann_Lebesgue}) to control those error terms. Subsequently, we use the definition of the $H^{-1}$-norm in terms of the Fourier transform, as well as judicious choice of various parameters, to deduce an estimate for $\|dA\|_{H^{-1}(\Omega)}$. 

The main difficulty to establish stability estimates for the magnetic potential $A$ is as follows. We can only obtain an estimate for the magnetic field $dA$ from the steps outlined above. However, as uniqueness results have been achieved for the magnetic potential $A$, we must also establish stability estimates for $A$ in a suitable norm. To overcome this difficulty we shall utilize a specific Hodge decomposition of vector fields \cite[Theorem 3.3.2]{Sharafutdinov}: For every vector field $A$, there exists a vector field $A^{\mathrm{sol}}$ and a function $\varphi$, both of which are uniquely determined, such that $A=A^{\mathrm{sol}}+\nabla \varphi$, where the \textit{solenoidal} part $A^{\mathrm{sol}}$ and the  \textit{potential} part $\nabla \varphi$ satisfy the properties $\div A^{\mathrm{sol}}=0$ and $\varphi|_{\p \Omega}=0$. Here $\div A^{\mathrm{sol}}$ is the divergence of $A^{\mathrm{sol}}$. Then an application of the Morrey's inequality yields
\[
\|A^{\mathrm{sol}}\|_{L^\infty(\Omega)}\le \|dA\|_{L^\infty(\Omega)}.
\]
Notice that we need an estimate for $\|dA\|_{L^\infty(\Omega)}$ in order to apply the Morrey's inequality, but we only established one for $\|dA\|_{H^{-1}(\Omega)}$ from earlier parts of the proof. To medicate this, as we shall detail in estimate \eqref{eq:est_dA_Linfty}, we apply the Sobolev embedding theorem and the interpolation inequality \cite[Theorem 7.22]{Grubb} to obtain an estimate for $\|dA\|_{L^\infty(\Omega)}$. We point out that this is the reason why we assumed an a priori bound for the norm $\|A\|_{H^s(\Omega)}$, $s>\frac{n}{2}+1$, in the definition of  $\mathcal{A}(\Omega, M)$. This is also the reason why we have a log-type estimate for $\|A\|_{L^\infty(\Omega)}$ instead of  $\|A\|_{H^{-1}(\Omega)}$ in Theorem \ref{thm:estimate_A_Linfty}.

To prove an estimate for $\|A\|_{L^\infty(\Omega)}$, we still need to establish an estimate for $\|\nabla \varphi\|_{L^\infty(\Omega)}$. This is accomplished in Lemma \ref{lem:est_test_function_gradient} by utilizing the CGO solutions \eqref{eq:CGO_v_explicit_hyp} and \eqref{eq:CGO_u2_explicit_hyp} again. From here, estimate \eqref{eq:estimate_A_Linfty} follows immediately by combining the estimates for $\|A^{\mathrm{sol}}\|_{L^\infty(\Omega)}$ and $\|\nabla \varphi\|_{L^\infty(\Omega)}$.  

Finally, we would like to point out that the techniques utilized to prove the main results in this paper do not enable us to establish estimates \eqref{eq:estimate_A_Linfty} and \eqref{eq:estimate_q_hyp} simultaneously. Due to the $Du_2$ term in the integral inequality \eqref{eq:int_id_est_hyp}, there will be a term of magnitude $h^{-1}$ after the CGO solutions  \eqref{eq:CGO_v_explicit_hyp} and \eqref{eq:CGO_u2_explicit_hyp} are substituted into the left-hand side of \eqref{eq:int_id_est_hyp}.  To medicate this issue, we need to multiply the left-hand side of \eqref{eq:int_id_est_hyp} by $h$. This step is done in the proof of Proposition \ref{prop:est_dA_H-1norm}. It then follows from estimate \eqref{eq:est_qterm} that the term involving the electric potential $q$ on the left-hand side of \eqref{eq:int_id_est_hyp} vanishes as $h\to 0$, which prevents us from achieving simultaneous reconstruction of both potentials $A$ and $q$.

This paper is organized as follows. In Section \ref{sec:prelim_results} we state a result pertaining the existence of CGO solutions, as well as a quantitative version of the Riemann-Lebesgue lemma, which will be applied in the proof. Section \ref{sec:proof_mag_term} is devoted to proving Theorem \ref{thm:estimate_A_Linfty}, the estimate for the magnetic potential. Finally, we verify the estimates concerning the electric potential, i.e.,  Theorem \ref{thm:estimate_q_hyp} and Corollary \ref{cor:estimate_q_Linfty}, in Section \ref{sec:proof_elec_term}.

\subsection*{Acknowledgments}
We would like to express our gratitude to the anonymous referees for their valuable feedback, which led to significant improvements of the paper.

\section{Preliminary Results}
\label{sec:prelim_results}
In this short section we collect some results that are necessary to establish the stability estimates in this paper. Let us start by stating a result concerning the existence of CGO solutions to the equation $\mathcal{L}_{A,q}u=0$ in $\Omega$ of the form
\[
u(x,\zeta, h)=e^{\frac{x\cdot \zeta}{h}}(a(x,\zeta)+r(x,\zeta, h)).
\] 
Here $\zeta \in \C^n$ is a complex vector satisfying $\zeta\cdot \zeta=0$, $h>0$ is a small semiclassical parameter, $a$ is a smooth amplitude, and $r$ is a correction term that vanishes  in a suitable sense in the limit $h\to 0$. We refer readers to see \cite{Krupchyk_Lassas_Uhlmann_bi_partial,Krupchyk_Lassas_Uhlmann_poly,Yang} for detailed discussions and proofs. In the following proposition, we equip the domain $\Omega$ with a semiclassical Sobolev norm
\[
\|u\|^2_{H^1_\scl(\Omega)}=\|u\|^2_{L^2(\Omega)}+\|hD u\|^2_{L^2(\Omega)}.
\]
\begin{prop}
\label{prop:CGO_solutions}
Let $A \in W^{1,\infty}(\Omega, \C^n)$ and $q \in L^\infty(\Omega, \C)$. Let $\zeta \in \C^n$ be a complex vector such that $\zeta \cdot \zeta = 0, \: \zeta = \zeta^{(0)}+\zeta^{(1)}$, where $\zeta^{(0)}$ is independent of $h>0$, $|\Re \zeta^{(0)}| = |\Im \zeta^{(0)}|=1$, and $\zeta^{(1)} = \mathcal{O}(h)$ as $h \to 0$. Then for all $h>0$ small enough, there exists a solution $u \in H^4(\Omega)$ to the equation $\mathcal{L}_{A,q}u=0$ in $\Omega$ of the form 
\begin{equation}
	\label{eq:CGO_form_general}
u(x, \zeta, h) = e^{\frac{ix \cdot \zeta}{h}}(a(x, \zeta^{(0)})+r(x, \zeta, h)),
\end{equation}
where $a \in C^\infty(\overline{\Omega})$ satisfies the transport equation
\begin{equation}
\label{eq:transport_equation}
(\zeta^{(0)} \cdot \nabla)^2a = 0 \quad \text{in} \quad \Omega,
\end{equation}
and $\|r\|_{H^1_{\scl}(\Omega)} =\cO(h)$ as $h\to 0$.
\end{prop}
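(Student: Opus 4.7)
The plan is the standard conjugation-and-correction approach for CGO construction. Setting $u = e^{ix\cdot\zeta/h}(a+r)$ and using that $D(e^{ix\cdot\zeta/h}\varphi) = e^{ix\cdot\zeta/h}(D + h^{-1}\zeta)\varphi$ together with $\zeta\cdot\zeta = 0$, a direct computation gives
\[
e^{-ix\cdot\zeta/h}\, h^4\, \mathcal{L}_{A,q}\bigl(e^{ix\cdot\zeta/h}(a+r)\bigr) = P_\zeta^{\,2}(a+r) + h^3 (A\cdot\zeta)(a+r) + h^4 (A\cdot D)(a+r) + h^4 q(a+r),
\]
where $P_\zeta := -h^2\Delta + 2h\,\zeta\cdot D$. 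Expanding $P_\zeta^{\,2} = 4h^2(\zeta\cdot D)^2 - 4h^3(\zeta\cdot D)\Delta + h^4\Delta^2$ and dividing by $h^2$, the leading-order operator in $h$ is $4(\zeta^{(0)}\cdot D)^2 = -4(\zeta^{(0)}\cdot\nabla)^2$; every other contribution carries at least one explicit power of $h$, using $\zeta^{(1)} = \cO(h)$.

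The first step is to pick an amplitude $a\in C^\infty(\overline{\Omega})$ solving the transport equation $(\zeta^{(0)}\cdot\nabla)^2 a = 0$. Because $\zeta^{(0)}\cdot\zeta^{(0)} = 0$, any smooth function of the complex linear form $\zeta^{(0)}\cdot x$ already lies in the kernel of $\zeta^{(0)}\cdot\nabla$, so there is a large space of admissible amplitudes; the precise choice is dictated by the later reflection argument used to enforce the Navier boundary conditions on the flat face $\Gamma_0$.

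With $a$ fixed, the correction $r$ is required to solve
\[
\bigl(h^{-2}P_\zeta^{\,2} + h A\cdot\zeta + h^2 A\cdot D + h^2 q\bigr) r = -\bigl(h^{-2}P_\zeta^{\,2} + h A\cdot\zeta + h^2 A\cdot D + h^2 q\bigr) a.
\]
The right-hand side is $\cO(h)$ in $L^2(\Omega)$: the principal piece $4(\zeta^{(0)}\cdot D)^2 a$ of $h^{-2}P_\zeta^{\,2}a$ vanishes by the transport equation, while the remaining terms of $h^{-2}P_\zeta^{\,2}a$ each carry an explicit power of $h$ coming either from the expansion of $P_\zeta^{\,2}$ or from the smallness of $\zeta^{(1)}$, and the terms $hA\cdot\zeta\,a$, $h^2A\cdot D\,a$, $h^2 q\,a$ are manifestly of the required size.

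The main obstacle is the fourth-order semiclassical solvability for $h^{-2}P_\zeta^{\,2}$, with a gain strong enough to absorb the first- and zero-order perturbations and yield $\|r\|_{H^1_{\scl}(\Omega)} = \cO(h)$. Here I would follow the Carleman-estimate route of \cite{Krupchyk_Lassas_Uhlmann_poly,Krupchyk_Lassas_Uhlmann_bi_partial,Yang}: establish the basic $L^2$-Carleman estimate for $P_\zeta$ with weight $e^{-x\cdot\Re\zeta^{(0)}/h}$ by extending $\Omega$ into a rectangular box and using Fourier series; iterate it to obtain the solvability statement for $P_\zeta^{\,2}$ with the correct power of $h$; absorb the lower-order perturbations $hA\cdot\zeta + h^2A\cdot D + h^2 q$ through a Neumann series argument for $h$ sufficiently small; and finally upgrade the resulting $L^2$ bound on $r$ to the desired $H^1_{\scl}(\Omega)$ bound via semiclassical elliptic regularity for the principal part $P_\zeta^{\,2}$, with further bootstrapping giving the $H^4(\Omega)$ regularity asserted in the statement.
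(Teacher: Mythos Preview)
Your outline is correct and matches the approach the paper invokes: the paper does not give its own proof of this proposition but cites \cite{Krupchyk_Lassas_Uhlmann_bi_partial,Krupchyk_Lassas_Uhlmann_poly,Yang}, and the conjugation-transport-Carleman scheme you describe is precisely the argument carried out there. Your computation of the conjugated operator, the identification of the transport equation $(\zeta^{(0)}\cdot\nabla)^2 a = 0$, and the plan to solve for $r$ via the iterated Carleman estimate for $P_\zeta^{\,2}$ followed by a perturbation argument are all accurate and standard for this setting.
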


We also need the following quantitative version of the Riemann-Lebesgue lemma to control the error terms involving Fourier transforms. This result was originally proved in \cite[Lemma 2.4]{Choudhury_Heck}.
\begin{lem}
\label{lem:Riemann_Lebesgue} 
Let $f\in L^1(\R^n)$ and suppose that there exist constants $\delta>0$, $C_0>0$ and $\alpha \in (0,1)$ such that for $|y|<\delta$, we have
\begin{equation}
\label{eq:smallness}
\|f(\cdot-y)-f(\cdot)\|_{L^1(\R^n)}\le C_0|y|^\alpha.
\end{equation}
Then there exist constant $C=C(C_0, \|f\|_{L^1}, n,\delta,\alpha)>0$ and $\varepsilon_0>0$ such that the inequality
\begin{equation}
\label{eq:Riemann_Lebesgue}
|\cF (f)(\xi)|\le C(e^{-\frac{\varepsilon^2|\xi|^2}{4\pi}}+\varepsilon^\alpha)
\end{equation}
holds for any $0<\varepsilon<\varepsilon_0$.
\end{lem}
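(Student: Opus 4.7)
The plan is to establish the bound by convolving $f$ with a Gaussian mollifier at scale $\varepsilon$ and then splitting $\hat{f}$ into a Gaussian‑smoothed part (which inherits Gaussian decay in $\xi$) and a mollification error (which is small thanks to the quantitative $L^1$‑continuity hypothesis \eqref{eq:smallness}).

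Concretely, I would set $\eta(x) = e^{-\pi|x|^2}$ and $\eta_\varepsilon(x) = \varepsilon^{-n}\eta(x/\varepsilon)$, so that with the chosen Fourier convention one has $\hat{\eta_\varepsilon}(\xi) = e^{-\varepsilon^{2}|\xi|^{2}/(4\pi)}$ and $\int \eta_\varepsilon = 1$. Writing $f_\varepsilon = f * \eta_\varepsilon$, the triangle inequality gives
\begin{equation*}
|\hat{f}(\xi)| \le |\hat{f_\varepsilon}(\xi)| + |\hat{f}(\xi) - \hat{f_\varepsilon}(\xi)|
\le \|f\|_{L^1}\, e^{-\varepsilon^{2}|\xi|^{2}/(4\pi)} + \|f - f_\varepsilon\|_{L^1},
\end{equation*}
where the first term is already of the required form.

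Next I would control $\|f - f_\varepsilon\|_{L^1}$ by Minkowski's inequality for integrals,
\begin{equation*}
\|f - f_\varepsilon\|_{L^1} \le \int_{\R^n} \|f(\cdot) - f(\cdot - y)\|_{L^1}\, \eta_\varepsilon(y)\, dy,
\end{equation*}
and split the integral at $|y| = \delta$. On $\{|y| < \delta\}$ the hypothesis \eqref{eq:smallness} gives the integrand $\le C_0 |y|^\alpha \eta_\varepsilon(y)$, and the substitution $y = \varepsilon z$ turns this into $C_0 \varepsilon^\alpha \int |z|^\alpha e^{-\pi|z|^2}dz \le C \varepsilon^\alpha$. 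On $\{|y|\ge \delta\}$ the crude bound $\|f(\cdot - y) - f\|_{L^1}\le 2\|f\|_{L^1}$ combined with another rescaling yields
\begin{equation*}
2\|f\|_{L^1}\int_{|z|\ge \delta/\varepsilon} e^{-\pi|z|^2}\, dz \le C\, e^{-c\delta^2/\varepsilon^2},
\end{equation*}
which is dominated by $\varepsilon^\alpha$ once $\varepsilon$ is smaller than some $\varepsilon_0 = \varepsilon_0(\delta,\alpha)$. Combining the three pieces delivers \eqref{eq:Riemann_Lebesgue}.

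The main technical point, rather than an obstacle, is the bookkeeping of constants in these two rescaling arguments and the verification that $e^{-c\delta^{2}/\varepsilon^{2}}\lesssim \varepsilon^\alpha$ for all $\varepsilon < \varepsilon_0$; everything else is a direct application of Young/Minkowski together with the well-known self-dual Gaussian computation. No further structural ingredient is needed, so the proof is essentially the one‑page argument of \cite[Lemma 2.4]{Choudhury_Heck} rewritten with the mollifier scale identified with $\varepsilon$.
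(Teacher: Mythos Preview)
Your argument is correct. The paper itself does not supply a proof of this lemma; it merely quotes the result from \cite[Lemma 2.4]{Choudhury_Heck}, and your Gaussian-mollifier splitting is exactly the standard proof given there, with constants and Fourier convention matching the form \eqref{eq:Riemann_Lebesgue}.
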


\section{Estimate for the Magnetic Potential}
\label{sec:proof_mag_term}

The goal of this section is to prove Theorem \ref{thm:estimate_A_Linfty}. We shall start this section by constructing CGO solutions vanishing on the inaccessible part $\Gamma_0$ of the boundary, followed by establishing an estimate for the magnetic field $dA$. Finally, we utilize a special Hodge decomposition to verify estimate \eqref{eq:estimate_A_Linfty}.

\subsection{Construction of CGO solutions}
In this subsection we utilize the reflection arguments originally developed in \cite{Isakov_07} to construct CGO solutions $u_2$ and $v$ to the equations $\mathcal{L}_{A^{(2)}, q_2}u_2=0$ and $\mathcal{L}_{A^{(1)}, q_1}^\ast v=0$ in $\Omega$, respectively. In particular, the solutions $u_2$ and $v$ satisfy
\begin{equation}
\label{eq:condition_hyp}
u_2|_{\Gamma_0}=(\Delta u_2)|_{\Gamma_0}=0, \quad v|_{\Gamma_0}=(\Delta v)|_{\Gamma_0}=0.
\end{equation} 

Let us first introduce some notations. In what follows we shall denote $x^\ast=(x', -x_n)$ for all $x=(x',x_n)\in \R^{n-1}\times \R$, and denote $f^\ast(x)=f(x^\ast)$ for any function $f$.

We shall proceed to construct CGO solutions as follows. For a given non-zero vector $\xi\in \R^n$, we choose unit vectors $\mu^{(1)}, \mu^{(2)}\in \R^n$ in an appropriate way, which we shall specify later. The vectors $\xi$, $\mu^{(1)}$, and $\mu^{(2)}$ will be used to construct CGO solutions. Let us write $\xi=(\xi',\xi_n)$, where $\xi'=(\xi_1,\dots, \xi_{n-1})\in \R^{n-1}$, and assume first that $\xi' \ne 0$. We then define vectors $e(1)=(\frac{\xi'}{|\xi'|},0)$, $e(n)=(0,\dots, 0, 1)$, and complete $e(1), \dots, e(n)$ to an orthonormal basis in $\R^n$, which we shall denote by $\{e(1), e(2), \dots, e(n)\}$. In this basis, the vector $\xi$ has the coordinate representation $\tilde \xi=(|\xi'|,0,\dots,0,\xi_n )$. If $\xi'=0$, we let $\{e(1), \dots, e(n)\}$ be the standard basis in $\R^n$.

For any pair of vectors $\eta^{(1)},\eta^{(2)}\in \R^n$, we denote by $\tilde \eta^{(1)}, \tilde \eta^{(2)}$ their coordinate representations in the  basis $\{e(1), \dots, e(n)\}$. Then we have 
\[
\eta^{(1)}\cdot \eta^{(2)}=\tilde \eta^{(1)}\cdot \tilde \eta^{(2)}, \quad \eta^{(1)}_n=\tilde \eta^{(1)}_n,\quad \eta^{(2)}_n=\tilde \eta^{(2)}_n.
\]
Let $\mu^{(1)}, \mu^{(2)}$ be vectors in $\R^n$ such that
\begin{equation}
\label{eq:def_mu12_new_basis}
\tilde \mu^{(1)}=\bigg(-\frac{\xi_n}{|\xi|}, 0,\dots, 0, \frac{|\xi'|}{|\xi|}\bigg) , \quad \tilde \mu^{(2)}=(0,1,0,\dots, 0).
\end{equation}
Then direct computations show that 
\begin{equation}
\label{eq:condition_vectors}
\mu^{(1)}\cdot\mu^{(2)}=\mu^{(1)}\cdot \xi=\mu^{(2)}\cdot \xi=0, \quad |\mu^{(1)}|=|\mu^{(2)}|=1.
\end{equation}

We now set
\begin{equation}
\label{eq:form_zeta}
\begin{aligned}
\zeta_1=\frac{h\xi}{2}+\sqrt{1-h^2\frac{|\xi|^2}{4}}\mu^{(1)}+i \mu^{(2)}, \quad
\zeta_2=-\frac{h\xi}{2}+\sqrt{1-h^2\frac{|\xi|^2}{4}} \mu^{(1)}-i \mu^{(2)},
\end{aligned}
\end{equation}
where $0<h\ll 1$ is a small semiclassical parameter such that $1-h^2 \frac{|\xi|^2}{4} \ge 0$. It  follows from direct computations that $\zeta_j\cdot\zeta_j=0$, $j=1,2$, and $\frac{\zeta_2-\overline{\zeta_1}}{h}=-\xi$. Moreover, we see that $\zeta_1 = \mu^{(1)} + i\mu^{(2)} +\mathcal{O}(h)$ and $\zeta_2 = \mu^{(1)} - i\mu^{(2)} + \mathcal{O}(h)$ as $h \to 0$. From the definition of $\zeta^{(0)}$ given in Proposition \ref{prop:CGO_solutions}, we have
\[
\zeta_1^{(0)}= \mu^{(1)} + i\mu^{(2)} \quad \text{ and } \quad \zeta_2^{(0)}= \mu^{(1)} - i\mu^{(2)}. 
\]

In order to fulfill the conditions in \eqref{eq:condition_hyp}, similar to \cite{Isakov_07,Krup_Lass_Uhl_magSchr_euc,Potenciano_Machado_17,Yang}, we reflect $\Omega$ with respect to the hyperplane $\{x_n=0\}$ and denote this reflection by 
\[
\Omega^\ast :=\{x^\ast: x=(x', x_n)\in \Omega\}, \quad x'=(x_1, \dots, x_{n-1})\in \R^{n-1}.
\]
We also reflect the magnetic potential $A=(A_1, \dots, A_n)$ and the electric potential $q$ with respect to $\{x_n=0\}$. Specifically, we shall make even extensions for $A_j$, $j=1, \dots, n-1$, and make an odd extension for $A_n$. That is, we set
\begin{equation}
\label{eq:extension_A}	
\tilde{A}_j(x)=
\begin{cases}
A_j(x), \: x_n<0,
\\
A_j(x^\ast), \:  x_n > 0,
\end{cases}
j=1, \dots, n-1, \quad 
\tilde{A}_n(x)=
\begin{cases}
A_n(x), \: x_n<0,
\\
-A_n(x^\ast), \:  x_n > 0.
\end{cases}
\end{equation}
In the same way, we extend electric potential $q$ evenly to write 
\begin{equation}
\label{eq:extension_q}
\tilde{q}(x)=
\begin{cases}
q(x), \quad x_n<0,
\\
q(x^\ast), \quad x_n > 0.
\end{cases}
\end{equation}
As explained in \cite[Section 5]{Yang}, we have $\tilde A^{(j)}\in W^{1,\infty}(\Omega \cup \Omega^\ast;\C^n)$ and $q_j\in L^\infty(\Omega \cup \Omega^\ast;\C)$, $j=1,2$. Furthermore, we get $\tilde{A}^{(j)}\in \mathcal{A}(\Omega \cup \Omega^\ast, 2M)$ and $q_j\in \mathcal{Q}(\Omega \cup \Omega^\ast, 2M)$.

Let $B=B(0,\tilde{R})$ be a ball in $\R^n$ centered at the origin with radius $\tilde R\ge 1$ such that $\Omega \cup \Omega^\ast\subset \subset B$.  By Proposition \ref{prop:CGO_solutions}, there exists a solution $v\in H^4(B)$ to the equation $\mathcal{L}_{\tilde A^{(1)}, \tilde q_1}^\ast \tilde v=0$ in $B$ of the form
\begin{equation}
\label{eq:u1_form}
\tilde v(x, \zeta_1, h)=e^{\frac{ix\cdot \zeta_1}{h}}(a_1(x,  \mu^{(1)} + i\mu^{(2)})+r_1(x, \zeta_1,h)).
\end{equation}
There also exists a solution $u_2\in H^4(B)$ to the equation $\mathcal{L}_{\tilde A^{(2)}, \tilde q_2} \tilde{u_2}=0$ in $B$ given by
\begin{equation}
\label{eq:u2_form}
\tilde{u_2}(x, \zeta_2, h)=e^{\frac{ix\cdot \zeta_2}{h}}(a_2(x, \mu^{(1)}-i\mu^{(2)})+r_2(x, \zeta_2,h)).
\end{equation}
Here the amplitudes $a_j\in C^\infty(\overline{B})$, $j=1,2$, solve the transport equations 
\begin{equation}
\label{eq:trans_eq_u1_u2}
((\mu^{(1)} + i\mu^{(2)}) \cdot \nabla)^2a_1(x, \mu^{(1)} + i\mu^{(2)}) = 0 \: \text{ and }((\mu^{(1)} - i\mu^{(2)}) \cdot \nabla)^2a_2(x, \mu^{(1)} - i\mu^{(2)})=0 \: \text{ in } B, 
\end{equation}
respectively, and the correction terms $r_j\in H^4(B)$, $j=1,2$, satisfy the estimate 
\begin{equation}
\label{eq:est_remainder}
\|r_j\|_{H^1_{\scl}(B)}=\cO(h), \quad h\to 0.
\end{equation}
By the interior elliptic regularity, we have $\tilde{v}, \tilde{u_2}\in H^4(\Omega\cup \Omega^\ast)$ and
\begin{equation}
\label{eq:est_r_domain}
\|r_j\|_{H^1_{\scl}(\Omega \cup \Omega^\ast)}=\cO(h), \quad h\to 0.
\end{equation}

We now proceed to construct CGO solutions in $\Omega$ satisfying the conditions in \eqref{eq:condition_hyp}. To that end, we set
\[
v(x)=\tilde v(x)-\tilde{v^\ast}(x) \quad \text{and} \quad  u_2(x)=\tilde{u_2}(x)-\tilde{u_2^\ast}(x),
\]
where $\tilde v$ and $\tilde{u_2}$ are CGO solutions given by \eqref{eq:u1_form} and \eqref{eq:u2_form}, respectively. Then we have $v,u_2\in H^4(\Omega)$. Furthermore, direct computations yield that $v$ and $u_2$ satisfy the conditions in \eqref{eq:condition_hyp}. 
%
Hence, the explicit forms of the desired CGO solutions $v$ and $u_2$ in $\Omega$ are given by
\begin{equation}
\label{eq:CGO_v_explicit_hyp}
v(x, \zeta_1, h)=e^{\frac{ix\cdot \zeta_1}{h}}(a_1(x, \zeta^{(0)}_1)+r_1(x, \zeta_1,h))-e^{\frac{i x \cdot \zeta_1^\ast}{h}}(a_1^\ast (x, \zeta^{(0)}_1)+r_1^\ast(x, \zeta_1,h))
\end{equation}
and
\begin{equation}
\label{eq:CGO_u2_explicit_hyp}
u_2(x, \zeta_2, h)=e^{\frac{ix\cdot \zeta_2}{h}}(a_2(x, \zeta^{(0)}_2)+r_2(x, \zeta_2,h))-e^{\frac{ix \cdot \zeta_2^\ast}{h}}(a_2^\ast(x, \zeta^{(0)}_2)+r_2^\ast(x, \zeta_2,h)).
\end{equation}

\subsection{Derivation of an estimate for the magnetic field}
The goal of this subsection is to utilize the CGO solutions \eqref{eq:CGO_v_explicit_hyp} and \eqref{eq:CGO_u2_explicit_hyp} to derive an estimate for the magnetic field $d(A^{(2)}-A^{(1)})$. Our starting point is the following integral inequality.

\begin{lem}
\label{lem:est_int_id}
Let $A^{(j)}\in W^{1,\infty}(\Omega, \C^n)$, $q_j\in L^\infty(\Omega, \C)$, and let  $\Lambda_{A^{(j)}, q_j}^\Gamma$ be the partial Dirichlet-to-Neumann map associated with the operator $\mathcal{L}_{A^{(j)}, q_j}$, $j=1,2$. Then there exists a constant $C>0$ such that the estimate 
\begin{equation}
\label{eq:int_id_est_hyp}
\bigg|\int_\Omega [(A^{(2)}-A^{(1)})\cdot Du_2+(q_2-q_1)u_2]\overline{v}dx\bigg|\le Ce^{\frac{9R}{h}}\|\Lambda_{A^{(1)}, q_1}^\Gamma-\Lambda_{A^{(2)}, q_2}^\Gamma\|
\end{equation}
holds for any functions $u_2, v\in H^4(\Omega)$ that satisfy the equations $\mathcal{L}_{A^{(2)}, q_2}u_2=0$ and $\mathcal{L}_{A^{(1)}, q_1}^\ast v=0$ in $\Omega$.
\end{lem}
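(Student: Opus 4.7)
The plan is to reduce the bulk integral on the left-hand side of \eqref{eq:int_id_est_hyp} to a boundary integral via an auxiliary solution and the biharmonic Green's identity, and then interpret that boundary integral through the partial Dirichlet-to-Neumann maps. First, I introduce $u_1 \in H^4(\Omega)$ as the unique solution (guaranteed by Assumption \ref{asu:eigenvalue}) of the Navier boundary value problem
\[
\mathcal{L}_{A^{(1)}, q_1} u_1 = 0 \text{ in } \Omega, \quad u_1|_{\partial \Omega} = u_2|_{\partial \Omega}, \quad (\Delta u_1)|_{\partial \Omega} = (\Delta u_2)|_{\partial \Omega}.
\]
Setting $w = u_1 - u_2 \in H^4(\Omega)$ and using $\mathcal{L}_{A^{(2)}, q_2} u_2 = 0$ yields
\[
\mathcal{L}_{A^{(1)}, q_1} w = (A^{(2)} - A^{(1)}) \cdot Du_2 + (q_2 - q_1) u_2 \text{ in } \Omega, \qquad w|_{\partial \Omega} = (\Delta w)|_{\partial \Omega} = 0.
\]

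Next, I pair the equation for $w$ with $\bar v$ and integrate over $\Omega$, moving $\mathcal{L}_{A^{(1)}, q_1}$ onto $v$ by applying Green's second identity twice for $\Delta^2$ and integrating by parts on the first-order term $A^{(1)}\cdot Dw$. The homogeneous conditions $w|_{\partial\Omega}=(\Delta w)|_{\partial\Omega}=0$ kill most of the boundary contributions, the magnetic boundary term disappears because it carries a factor of $w$, and $\mathcal{L}_{A^{(1)}, q_1}^\ast v = 0$ annihilates the interior piece. What survives is
\[
\int_\Omega \bigl[(A^{(2)} - A^{(1)}) \cdot Du_2 + (q_2 - q_1) u_2\bigr] \bar v \, dx = \int_{\partial \Omega} \bigl[(\partial_\nu \Delta w) \bar v + (\partial_\nu w)(\Delta \bar v)\bigr] dS.
\]
By the reflection construction, $u_2|_{\Gamma_0} = (\Delta u_2)|_{\Gamma_0} = 0$ and $v|_{\Gamma_0} = (\Delta v)|_{\Gamma_0} = 0$, so the boundary integrand vanishes identically on $\Gamma_0$ while the Dirichlet data $(u_2|_{\partial\Omega}, (\Delta u_2)|_{\partial\Omega})$ is admissible for both partial DtN maps. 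Consequently, $\partial_\nu w|_\Gamma$ and $\partial_\nu(\Delta w)|_\Gamma$ are exactly the two components of $(\Lambda^\Gamma_{A^{(1)}, q_1} - \Lambda^\Gamma_{A^{(2)}, q_2})(u_2|_{\partial\Omega}, (\Delta u_2)|_{\partial\Omega})$.

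Finally, I bound the resulting integral over $\Gamma$ by the duality pairing between $H^{\frac52,\frac12}(\Gamma)$ and $H^{-\frac52,-\frac12}(\Gamma)$ together with the trace theorem, obtaining a product of $\|\Lambda^\Gamma_{A^{(1)}, q_1} - \Lambda^\Gamma_{A^{(2)}, q_2}\|$ with the $H^{\frac72,\frac32}(\Gamma)$-norm of the Dirichlet data of $u_2$ and appropriate trace norms of $v$. Plugging in the explicit CGO expressions \eqref{eq:CGO_v_explicit_hyp}, \eqref{eq:CGO_u2_explicit_hyp} and using \eqref{eq:est_remainder}, each $x$-derivative contributes a factor $h^{-1}$ and the exponential factor $|e^{i x \cdot \zeta_j/h}| = e^{-x\cdot \mu^{(2)}/h}$ contributes $e^{CR/h}$ on $\overline{\Omega}$; the reflected pieces $e^{ix\cdot \zeta_j^\ast/h}$ add comparable contributions. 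Absorbing the polynomial factors $h^{-k}$ into the exponential for $h$ small gives the claimed $Ce^{9R/h}$ bound.

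The primary obstacle is purely technical bookkeeping: correctly tracking the exponential constant through the various Sobolev trace norms on $\Gamma$ (verifying that the coefficient in the exponent can indeed be absorbed into $9R$) and carrying out the duality argument in the correct partial data spaces. The Green-identity reduction and the identification of the boundary integrand with the DtN difference are by now standard in the biharmonic partial data setting, and the vanishing of $u_2$ and $v$ together with their Laplacians on $\Gamma_0$—which is the whole point of the reflection construction—makes the restriction of the boundary integral to $\Gamma$ straightforward.
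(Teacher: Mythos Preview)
Your proposal is correct and follows essentially the same route as the paper: introduce the auxiliary solution $u_1$ with matching Navier data, derive the boundary identity for $w=u_1-u_2$ via the biharmonic Green formula, restrict to $\Gamma$ using the vanishing of $v$, $\Delta v$ on $\Gamma_0$, identify the surviving terms with $(\Lambda_{A^{(1)},q_1}^\Gamma-\Lambda_{A^{(2)},q_2}^\Gamma)(u_2|_{\partial\Omega},\Delta u_2|_{\partial\Omega})$, and then bound everything using trace estimates and the CGO bounds (the paper gets $Ch^{-5}e^{4R/h}$ and absorbs the $h^{-5}$ via $h^{-1}\le e^{R/h}$ to reach $e^{9R/h}$). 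The only cosmetic difference is that the paper pairs in $L^2(\Gamma)$ via Cauchy--Schwarz before invoking the trace theorem, whereas you phrase the same step as a duality pairing in $H^{\frac52,\frac12}(\Gamma)$; the resulting norms of $u_2$ and $v$ are the same.
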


\begin{proof}
We begin the proof by recalling the Green's formula, which holds for all $u,v\in H^4(\Omega)$, see \cite{Grubb}:
\begin{equation}
\label{eq:Green_id}
\begin{aligned}
\int_\Omega(\mathcal{L}_{A,q}u)\overline{v}dx-\int_{\Omega}u\overline{\mathcal{L}_{A,q}^\ast v}dx = 
&-i\int_{\p \Omega}\nu \cdot uA\overline{v}dS+\int_{\p \Omega}\p_\nu (\Delta u)\overline{v}dS-\int_{\p \Omega}(\Delta u)\overline{\p_\nu v}dS
\\
&+\int_{\p \Omega}\p_\nu u(\overline{\Delta v})dS-\int_{\p \Omega}u\overline{\p_\nu(\Delta v)}dS.
\end{aligned}
\end{equation}
Here $\mathcal{L}_{A,q}^\ast$ is the $L^2$-adjoint of $\mathcal{L}_{A,q}$, and $\nu$ is the outward unit normal to the boundary $\p \Omega$.

Let $u_j$, $j=1,2$, be solutions to the boundary value problem \eqref{eq:bvp} with coefficients $A^{(j)}, q_j$,  respectively. By a direct computation, we see that the function $u:=u_1-u_2$ solves the following equation
\begin{equation}
\label{eq:difference}
\begin{cases}
\mathcal{L}_{A^{(1)}, q_1}u=(A^{(2)}-A^{(1)})\cdot Du_2+(q_2-q_1)u_2 \quad  \text{ in } \quad  \Omega,
\\
u=0 \quad \text{ on } \quad  \p \Omega,
\\
\Delta u=0 \quad \text{ on } \quad  \p \Omega.
\end{cases}
\end{equation}
Multiplying both sides of the first equation in \eqref{eq:difference} by $\overline{v}$ and applying the Green's formula \eqref{eq:Green_id}, we obtain the integral identity
\begin{equation}
\label{eq:int_id_A_hyp}
\int_\Omega [(A^{(2)}-A^{(1)})\cdot Du_2+(q_2-q_1)u_2]\overline{v}dx
=
\int_{\Gamma}\p_\nu (\Delta (u_1-u_2))\overline{v}dS
+\int_{\Gamma}\p_\nu (u_1-u_2)(\overline{\Delta v})dS,
\end{equation}
where we have also used $\supp v, \supp(\Delta v)\subseteq \Gamma$.

Let us now estimate the right-hand side of \eqref{eq:int_id_A_hyp}. To that end, we apply the Cauchy-Schwartz inequality and the trace theorem to deduce
\begin{equation}
\label{eq:int_id_rhs_est_hyp}
\begin{aligned}
\bigg|& \int_{\Gamma}\p_\nu (\Delta (u_1-u_2))\overline{v}dS
+\int_{\Gamma}\p_\nu (u_1-u_2)(\overline{\Delta v})dS\bigg|
\\
&\le \|\p_\nu (\Delta (u_1-u_2))\|_{L^2(\Gamma)} \|v\|_{L^2(\Gamma)}+ \|\p_\nu (u_1-u_2)\|_{L^2(\Gamma)} \|\Delta v\|_{L^2(\Gamma)}
\\
&\le C ( \|\p_\nu (\Delta (u_1-u_2))\|_{L^2(\Gamma)} \|v\|_{H^1(\Omega)}+ \|\p_\nu (u_1-u_2)\|_{L^2(\Gamma)} \|\Delta v\|_{H^1(\Omega)})
\\
&\le C(\|\p_\nu (\Delta (u_1-u_2))\|_{H^{\frac{1}{2}}(\Gamma)}+\|\p_\nu (u_1-u_2)\|_{H^{\frac{5}{2}}(\Gamma)})(\|v\|_{H^1(\Omega)}+\|\Delta v\|_{H^1(\Omega)})
\\
&\le C\|(\Lambda_{A^{(1)}, q_1}^\Gamma-\Lambda_{A^{(2)}, q_2}^\Gamma)(f,g)\|_{H^{\frac{5}{2}, \frac{1}{2}}(\Gamma)}(\|v\|_{H^1(\Omega)}+\|\Delta v\|_{H^1(\Omega)})
\\
&\le C\|\Lambda_{A^{(1)}, q_1}^\Gamma-\Lambda_{A^{(2)}, q_2}^\Gamma\| \|(f,g)\|_{H^{\frac{7}{2}, \frac{3}{2}}(\Gamma)}(\|v\|_{H^1(\Omega)}+\|\Delta v\|_{H^1(\Omega)})
\\
&\le C\|\Lambda_{A^{(1)}, q_1}^\Gamma-\Lambda_{A^{(2)}, q_2}^\Gamma\| (\|u_2\|_{H^4(\Omega)}+\|\Delta u_2\|_{H^2(\Omega)})(\|v\|_{H^1(\Omega)}+\|\Delta v\|_{H^1(\Omega)}).
\end{aligned}
\end{equation}

To proceed, we need to estimate the $H^s$-norms of $u_2, v$, and their derivatives appearing in the inequality above. Since $\Omega$ is a bounded domain, there exists a constant $R>0$ such that $\Omega \subseteq B(0, R)$. Thus, we have the inequality $\big|e^{\frac{ix\cdot \zeta_j}{h}}\big|\le e^{\frac{2R}{h}}$, where we have utilized the fact that $|\zeta_j|\le 2$. By the same computations as in \cite[Section 4]{Choudhury_Krishnan}, we obtain the estimates
\begin{equation}
\label{eq:est_solutions_hyp}
\|v\|_{H^1(\Omega)}\le \frac{C}{h}e^{\frac{2R}{h}}, \quad \|\Delta v\|_{H^1(\Omega)}\le \frac{C}{h}e^{\frac{2R}{h}}, \quad \|u\|_{H^4(\Omega)}\le \frac{C}{h^4}e^{\frac{2R}{h}}, \quad \|\Delta u_2\|_{H^2(\Omega)}\le \frac{C}{h}e^{\frac{2R}{h}}.
\end{equation}
Therefore, \eqref{eq:int_id_rhs_est_hyp} and \eqref{eq:est_solutions_hyp} yield
\begin{align*}
\bigg| &\int_{\Gamma}\p_\nu (\Delta (u_1-u_2))\overline{v}dS
+\int_{\Gamma}\p_\nu (u_1-u_2)(\overline{\Delta v})dS\bigg| 
\\
&\le C\|\Lambda_{A^{(1)}, q_1}-\Lambda_{A^{(2)}, q_2}\| \big( \frac{C}{h}e^{\frac{2R}{h}}+ \frac{C}{h^4}e^{\frac{2R}{h}}\big) \big( \frac{C}{h}e^{\frac{2R}{h}}+ \frac{C}{h}e^{\frac{2R}{h}}\big)
\\
&\le \frac{C}{h^5}e^{\frac{4R}{h}}\|\Lambda_{A^{(1)}, q_1}-\Lambda_{A^{(2)}, q_2}\|.
\end{align*}

From here, we obtain the claimed estimate \eqref{eq:int_id_est_hyp} from the inequality above by using the fact that $\frac{1}{h}\le e^{\frac{R}{h}}$. This completes the proof of Lemma \ref{lem:est_int_id}.
\end{proof}

We are now ready to state and prove an estimate for magnetic field.
\begin{prop}
\label{prop:est_dA_H-1norm}
Let $\Omega \subseteq \{x\in \R^n: x_n<0\}$, $n\ge 3$, be a bounded domain with smooth connected boundary $\p \Omega$. Suppose that $\Gamma_0=\p \Omega\cap \{x\in \R^n: x_n=0\}$ is non-empty, and let $\Gamma = \p \Omega \setminus \Gamma_0$. Let $A^{(j)}\in \mathcal{A}(\Omega, M)$ and $q_j\in \mathcal{Q}(\Omega, M)$, $j=1,2$. Then there exists constants $C>0$ and $h_0>0$ such that the estimate
\begin{equation}
\label{eq:H-1_est_dA}
\|d(A^{(2)}-A^{(1)})\|_{H^{-1}(\Omega)}\le Ce^{\frac{10R}{h}}\|\Lambda_{A^{(1)}, q_1}^\Gamma-\Lambda_{A^{(2)}, q_2}^\Gamma\|+Ch^{\frac{\alpha}{n+2}}
\end{equation} 
holds for $0<h<h_0$ and some $\alpha \in (0,1)$. Here $R>0$ is a constant such that $\Omega \subseteq B(0,R)$.
\end{prop}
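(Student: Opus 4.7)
The plan is to insert the CGO solutions \eqref{eq:CGO_v_explicit_hyp} and \eqref{eq:CGO_u2_explicit_hyp} into the integral inequality \eqref{eq:int_id_est_hyp} of Lemma \ref{lem:est_int_id}, isolate the Fourier transform of $d(A^{(2)}-A^{(1)})$, and then convert the resulting pointwise bound into an $H^{-1}$ estimate via a low/high frequency split. Since $Du_2$ brings down a factor $\zeta_2/h$ when one differentiates the CGO exponential, the very first step is to multiply both sides of \eqref{eq:int_id_est_hyp} by $h$. After this normalization, the electric potential contribution $h\int (q_2-q_1)u_2\overline{v}\,dx$ becomes negligible: the conjugate imaginary parts $\Im\zeta_2=-\mu^{(2)}=-\Im\overline{\zeta_1}$ force the exponential moduli in $u_2$ and $\overline{v}$ to cancel, so this term is $\mathcal{O}(h)$.

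I will then specialise \eqref{eq:CGO_v_explicit_hyp}--\eqref{eq:CGO_u2_explicit_hyp} to the trivial amplitudes $a_1\equiv a_2\equiv 1$, which satisfy \eqref{eq:trans_eq_u1_u2}, and expand the product $u_2\overline{v}$ into four contributions indexed by the choice of reflected or non-reflected exponential in each factor. The diagonal non-reflected $\times$ non-reflected contribution has phase $e^{-ix\cdot\xi}$ (thanks to $\frac{\zeta_2-\overline{\zeta_1}}{h}=-\xi$) and gives, modulo $\mathcal{O}(h)$ errors controlled by the remainder estimate \eqref{eq:est_r_domain}, a multiple of $(\mu^{(1)}-i\mu^{(2)})\cdot\widehat{(A^{(2)}-A^{(1)})}(\xi)$. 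The reflected $\times$ reflected contribution, combined with the even/odd extension conventions \eqref{eq:extension_A}, produces the analogous Fourier transform at the reflected frequency $\xi^*=(\xi',-\xi_n)$. The two mixed cross terms carry phases whose $x_n$-component is of order $h^{-1}$ (one checks that $\Im(\zeta_2-\overline{\zeta_1^*})=0$ while $\Re(\zeta_2-\overline{\zeta_1^*})/h$ has an $x_n$-component of order $h^{-1}$), so the quantitative Riemann--Lebesgue Lemma \ref{lem:Riemann_Lebesgue}, applied with an auxiliary parameter $\varepsilon$ tuned against $h$, bounds them by $\mathcal{O}(\varepsilon^\alpha+e^{-c\varepsilon^2/h^2})$. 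The smallness hypothesis \eqref{eq:smallness} required by that lemma is granted by the Sobolev embedding $H^s\hookrightarrow C^\alpha$ coming from $s>\frac{n}{2}+1$ in the definition of $\mathcal{A}(\Omega,M)$.

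Since $\mu^{(1)},\mu^{(2)}\perp\xi$ by \eqref{eq:condition_vectors}, the quantity $(\mu^{(1)}-i\mu^{(2)})\cdot\widehat{(A^{(2)}-A^{(1)})}(\xi)$ is exactly the projection of the Fourier transform of $A^{(2)}-A^{(1)}$ onto the real span of $\mu^{(1)},\mu^{(2)}$. Varying the free vector $\mu^{(2)}$ over an orthonormal basis of the $(n-2)$-dimensional complement of $\mathrm{span}\{\mu^{(1)},\xi\}$ recovers the full perpendicular part of $\widehat{(A^{(2)}-A^{(1)})}(\xi)$, and the identity $|\widehat{d(A^{(2)}-A^{(1)})}(\xi)|^2=2|\xi|^2\bigl|\widehat{(A^{(2)}-A^{(1)})}(\xi)^{\perp}\bigr|^2$ translates this into a pointwise bound of the form
\[
|\widehat{d(A^{(2)}-A^{(1)})}(\xi)|\le C|\xi|\Bigl(h^{-N}e^{9R/h}\|\Lambda^{\Gamma}_{A^{(1)},q_1}-\Lambda^{\Gamma}_{A^{(2)},q_2}\|+\varepsilon^\alpha+h\Bigr),
\]
valid uniformly in $|\xi|\le\rho$, where $N>0$ absorbs the polynomial prefactors produced by \eqref{eq:est_solutions_hyp}.

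Finally I would write $\|d(A^{(2)}-A^{(1)})\|_{H^{-1}(\Omega)}^2$ as $\int(1+|\xi|^2)^{-1}|\widehat{d(A^{(2)}-A^{(1)})}(\xi)|^2\,d\xi$ and split the integral at a threshold $|\xi|=\rho$: on $\{|\xi|\le\rho\}$ I use the pointwise bound just derived (which contributes a factor $\rho^n$ after integration), while on $\{|\xi|>\rho\}$ I use the a priori $L^2$ control $\|d(A^{(2)}-A^{(1)})\|_{L^2(\Omega)}\le C\|A^{(2)}-A^{(1)}\|_{H^1(\Omega)}\le CM$, afforded by the admissibility condition $A^{(j)}\in\mathcal{A}(\Omega,M)$ with $s>\frac{n}{2}+1$. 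Balancing the three parameters $\rho$, $\varepsilon$, and $h$ (each chosen as a suitable power of the others) produces the exponent $\alpha/(n+2)$ in \eqref{eq:H-1_est_dA} and absorbs the polynomial prefactor into an extra $e^{R/h}$. I expect the main obstacle to be the precise bookkeeping in the four-term expansion of $u_2\overline{v}$---in particular, checking that after varying $\mu^{(2)}$ the leading contribution genuinely extracts $\widehat{dA}(\xi)$ rather than a single component of $\widehat{A}(\xi)$---and the quantitative application of Lemma \ref{lem:Riemann_Lebesgue} to the two mixed cross terms with the parameter $\varepsilon$ correctly tuned to $h$ and to the target exponent $\alpha/(n+2)$.
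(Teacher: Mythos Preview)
Your proposal is correct and follows the paper's proof closely: multiply \eqref{eq:int_id_est_hyp} by $h$, insert the CGOs with $a_1=a_2=1$, isolate the Fourier transform of $d\tilde A$ from the leading terms, control remainders by \eqref{eq:est_r_domain}, and finish with a low/high frequency split balanced at $\rho=h^{-\alpha/(n+2)}$. Two small points where the paper's execution differs from your description. First, rather than treating the two diagonal contributions as $\widehat A(\xi)$ and $\widehat A(\xi^*)$ separately, the paper combines them via the change of variables $x\mapsto x^*$ and the extension \eqref{eq:extension_A} into a single integral $\zeta_2\cdot\int_{\Omega\cup\Omega^*}\tilde A\,e^{-ix\cdot\xi}dx$, which is cleaner. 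Second, and more substantively, the Riemann--Lebesgue bound on the cross terms is \emph{not} uniformly $e^{-c\varepsilon^2/h^2}$ as you wrote: the $n$-th component of $\xi_\pm$ in \eqref{eq:def_pm_xi} scales like $|\xi'|/(h|\xi|)$, so \eqref{eq:Riemann_Lebesgue} gives $e^{-c\varepsilon^2|\xi'|^2/(h^2|\xi|^2)}$, which degenerates to $1$ as $\xi'\to 0$. The paper does not try to make this small pointwise; instead it integrates this $\xi$-dependent Gaussian over the box $E(\rho)$ in polar coordinates in $\xi'$ after setting $\varepsilon=\sqrt h$ (estimate \eqref{eq:est_dA_exp_term}), producing an extra $C\rho^n h^{(n-1)/2}$ term that is then absorbed into $\rho^n h^\alpha$ before the balancing step. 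This is precisely the computation your last paragraph anticipates as the main obstacle.
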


\begin{proof}
We shall derive \eqref{eq:H-1_est_dA} by estimating the Fourier transform of $d(A^{(2)}-A^{(1)})$. To that end, we extend $A^{(j)}$ and $q_j$, $j=1,2$, by zero to $\R^n\setminus \Omega$ and denote the extensions by the same letters. It follows from \cite[Chapter 1, Theorem 3.4.4]{Agranovich} that $A^{(j)}\in W^{1,\infty}(\R^n;\C^n)$. For convenience, we shall denote $A:=A^{(2)}-A^{(1)}$ and $q:=q_2-q_1$ for the remainder of this section.

We now substitute the CGO solutions $v$ and $u_2$, given by \eqref{eq:CGO_v_explicit_hyp} and \eqref{eq:CGO_u2_explicit_hyp}, respectively, into the left-hand side of the integral inequality \eqref{eq:int_id_est_hyp}, multiply the resulted expression by $h$, and let $h\to 0$. This requires us to compute the quantities $\overline{v}Du_2$ and $\overline{v}u_2$, which contain expressions of the form
\begin{equation}
\label{eq:sum_diff_exp}
e^{\frac{ix\cdot(\zeta_2-\overline{\zeta_1})}{h}}=e^{-ix\cdot \xi}, \quad e^{\frac{ix\cdot(\zeta_2^\ast-\overline{\zeta_1^\ast})}{h}}=e^{-ix^\ast\cdot \xi}, \quad 
e^{\frac{ix\cdot (\zeta_2^\ast- \overline{\zeta_1})}{h}}=e^{-ix\cdot \xi_+}, \quad 
e^{\frac{ix\cdot (\zeta_2- \overline{\zeta_1^\ast})}{h}}=e^{-ix\cdot \xi_-},
\end{equation}
where 
\begin{equation}
\label{eq:def_pm_xi}
\xi_\pm = \bigg(\xi', \pm \frac{2}{h}\sqrt{1-h^2\frac{|\xi|^2}{4}}\frac{|\xi'|}{|\xi|}\bigg).
\end{equation}
Hence, we write the first term on the left-hand side of \eqref{eq:int_id_est_hyp} as
\begin{equation}
\label{eq:int_id_substitute_A}
\begin{aligned}
&h\int_\Omega A\cdot Du_2\overline{v}dx
\\
= &\int_\Omega A\cdot i(\zeta_2 e^{-ix\cdot \xi}\overline{a_1}a_2+ \zeta_2^\ast e^{-ix^\ast\cdot \xi}\overline{a_1^\ast}a_2^\ast)dx 
+ \int_\Omega A\cdot i(-\zeta_2 e^{-ix\cdot \xi_-}\overline{a_1^\ast}a_2- \zeta_2^\ast e^{-ix\cdot \xi_+}\overline{a_1}a_2^\ast)dx
\\
&+\int_\Omega A\cdot w_1 dx
\\
:=&I_1+I_2+I_3,
\end{aligned}
\end{equation}
where
\begin{align*}
w_1 &= i\zeta_2 \big[e^{-ix\cdot \xi}(\overline{a_1}r_2+a_2\overline{r_1}+\overline{r_1}r_2)
-e^{-ix\cdot \xi_-}(\overline{a_1^\ast}r_2+a_2\overline{r_1^\ast}+\overline{r_1^\ast}r_2)\big]
\\
&+i\zeta_2^\ast\big[ -e^{-ix\cdot \xi_+}(\overline{a_1}r_2^\ast+a_2^\ast\overline{r_1}+\overline{r_1}r_2^\ast)+e^{-ix^\ast\cdot \xi}(\overline{a_1^\ast}r_2^\ast+\overline{r_1^\ast}a_2^\ast+\overline{r_1^\ast}r_2^\ast)\big]
\\
&+h\big[e^{-ix\cdot \xi}(\overline{a_1}Da_2+\overline{a_1}Dr_2+\overline{r_1}Da_2+\overline{r_1}Dr_2)-e^{-ix\cdot \xi_-}(\overline{a_1^\ast}Da_2+\overline{a_1^\ast}Dr_2+\overline{r_1^\ast}Da_2+\overline{r_1^\ast}Dr_2)
\\
& -e^{-ix\cdot \xi_+} (\overline{a_1}Da_2^\ast+\overline{a_1}Dr_2^\ast+\overline{r_1}Da_2^\ast+\overline{r_1}Dr_2^\ast)
+e^{-ix^\ast\cdot \xi}(\overline{a_1^\ast}Da_2^\ast
+\overline{a_1^\ast}Dr_2^\ast+\overline{r_1^\ast}Da_2^\ast+\overline{r_1^\ast}Dr_2^\ast)
\big].
\end{align*}

We now investigate the limit of each integral in \eqref{eq:int_id_substitute_A} as $h\to 0$. To analyze $I_1$, we recall the form of $\zeta_2$ in \eqref{eq:form_zeta} and the definition of $\tilde{A}$ from \eqref{eq:extension_A}, in conjunction with making a change of variables, to deduce that
\begin{equation}
\label{eq:I1_sum}
I_1=i\zeta_2\cdot \int_{\Omega \cup \Omega^\ast} \tilde{A} e^{-ix\cdot \xi}\overline{a_1}a_2dx
\to i(\mu^{(1)}-i\mu^{(2)})\cdot \int_{\Omega \cup \Omega^\ast}  \tilde{A}e^{-ix\cdot \xi}\overline{a_1}a_2dx, \quad h\to 0.
\end{equation}

By replacing the vector $\mu^{(2)}$ in \eqref{eq:I1_sum} by $-\mu^{(2)}$, we have
\begin{equation}
\label{eq:I1_difference}
I_1\to i(\mu^{(1)}+i\mu^{(2)})\cdot \int_{\Omega \cup \Omega^\ast}  \tilde A e^{-ix\cdot \xi}\overline{a_1}a_2dx, \quad h\to 0.
\end{equation}
Hence, in view of \eqref{eq:I1_sum} and \eqref{eq:I1_difference}, we see that
\begin{equation}
\label{eq:I1_Fourier_a1a2}
I_1\to i\mu \cdot \int_{\Omega \cup \Omega^\ast}   \tilde A e^{-ix\cdot \xi}\overline{a_1}a_2dx, \quad h\to 0,
\end{equation}
for any $\mu \in \mathrm{Span}\{\mu^{(1)}, \mu^{(2)}\}$ and all $\xi \in \R^n$ satisfying \eqref{eq:condition_vectors}. By choosing $a_1=a_2=1$, which clearly satisfy the respective transport equation in \eqref{eq:trans_eq_u1_u2}, we get
\begin{equation}
\label{eq:I1_Fourier}
I_1\to i\mu \cdot \int_{\Omega \cup \Omega^\ast}  \tilde A e^{-ix\cdot \xi}dx=i\cF(d\tilde A)(\xi), \quad h\to 0,
\end{equation}
where $d\tilde A$ is a 2-form defined by \eqref{eq:def_mag_field}.

Turning attention to $I_2$, we argue similarly as above to obtain
\begin{equation}
\label{eq:I2_Fourier}
\begin{aligned}
I_2&\to -i\mu\cdot  \int_{\Omega\cup \Omega^\ast}\tilde A (e^{-ix\cdot \xi_-}+  e^{-ix\cdot \xi_+})dx
\\
&=-i\cF(d\tilde A)(\xi_-)-i\cF(d\tilde A)(\xi_+), \quad h\to 0.
\end{aligned}
\end{equation}
for any $\mu \in \mathrm{Span}\{\mu^{(1)}, \mu^{(2)}\}$ and all $\xi \in \R^n$ that satisfy \eqref{eq:condition_vectors}. By Lemma \ref{lem:Riemann_Lebesgue} and the definition of $\xi_\pm$ given by \eqref{eq:def_pm_xi}, there exist constant $C>0$, $\varepsilon_0>0$, and $0<\alpha<1$ such that the estimate 
\begin{equation}
\label{eq:est_I2}
|I_2|\le C\big(e^{-\frac{\varepsilon^2}{\pi}\cdot \frac{|\xi'|^2}{h^2|\xi|^2}}+\varepsilon^\alpha\big)
\end{equation}
is valid for any $0<\varepsilon<\varepsilon_0$.

Finally, we estimate $I_3$. Note that estimate \eqref{eq:est_r_domain} yields
\begin{equation}
\label{eq:est_remainder_derivative}
\|Dr_j\|_{L^2(\Omega \cup \Omega^\ast)}=\cO(1), \quad h\to 0.
\end{equation}
Since the amplitudes $a_j\in C^\infty(\overline{\Omega})$, $j=1,2$, we utilize estimates \eqref{eq:est_remainder} and \eqref{eq:est_remainder_derivative}, in conjunction with the Cauchy-Schwartz inequality, to deduce that
\begin{equation}
\label{eq:est_I3}
|I_3|\le Ch, \quad h\to 0.
\end{equation}

We next proceed to estimate the second term on the left-hand side of the integral inequality \eqref{eq:int_id_est_hyp}. To that end, after substituting the CGO solutions $v$ and $u_2$ given by \eqref{eq:CGO_v_explicit_hyp} and \eqref{eq:CGO_u2_explicit_hyp}, respectively, into \eqref{eq:int_id_est_hyp}, we have
\begin{align*}
&h\int_\Omega q u_2\overline{v}dx
\\
= &h \int_{\Omega}q [e^{-ix\cdot \xi}(\overline{a_1}a _2+\overline{a_1}r_2+a_2\overline{r_1}+\overline{r_1}r_2)
+e^{-ix^\ast\cdot \xi}(\overline{a_1^\ast} a_2^\ast+\overline{a_1^\ast} r_2^\ast+a_2^\ast\overline{r_1^\ast}+\overline{r_1^\ast}r_2^\ast)
\\
&\quad -e^{-ix\cdot \xi_-}(\overline{a_1^\ast}a_2+\overline{a_1^\ast}r_2+a_2\overline{r_1^\ast}+\overline{r_1^\ast}r_2)
-e^{-ix \cdot \xi_+}(\overline{a_1} a_2^\ast+\overline{a_1} r_2^\ast+a_2^\ast\overline{r_1}+\overline{r_1}r_2^\ast)]dx.
\end{align*}
As $a_j\in C^\infty(\overline{\Omega})$, $j=1,2$, we apply estimate \eqref{eq:est_remainder} and the Cauchy-Schwartz inequality to conclude that the following estimate is valid
\begin{equation}
\label{eq:est_qterm}
\bigg|h\int_\Omega (q_2-q_1) u_2\overline{v}dx\bigg| \le Ch, \quad h\to 0.
\end{equation}
Therefore, by combining \eqref{eq:I1_Fourier}, \eqref{eq:est_I2}, \eqref{eq:est_I3}, and \eqref{eq:est_qterm}, we deduce from the integral inequality \eqref{eq:int_id_est_hyp} that
\begin{equation}
\label{eq:est_Fourier_dA}
|\cF(d\tilde A)(\xi)|\le C\big(e^{\frac{9R}{h}}\|\Lambda_{A^{(1)}, q_1}^\Gamma-\Lambda_{A^{(2)}, q_2}^\Gamma\|+h+e^{-\frac{\varepsilon^2}{\pi}\cdot \frac{|\xi'|^2}{h^2|\xi|^2}}+\varepsilon^\alpha\big), \quad h\to 0.
\end{equation}

We are now ready to derive a stability estimate for  $\|dA\|_{H^{-1}(\Omega)}$. With a parameter $\rho>0$ to be specified later, let us consider the set
\begin{equation}
\label{eq:def_Erho}
E(\rho)=\{\xi\in \R^n: |\xi'|\le \rho, \: |\xi_n|\le \rho\}.
\end{equation}
Then an application of the Parseval's formula gives us
\begin{equation}
\label{eq:H-1norm_split}
\|d\tilde{A}\|_{H^{-1}(\R^n)}^2\le \int_{E(\rho)}\frac{|\cF(d\tilde{A})(\xi)|^2}{1+|\xi|^2}d\xi +\int_{\R^n\setminus E(\rho)}\frac{|\cF(d\tilde{A})(\xi)|^2}{1+|\xi|^2}d\xi.
\end{equation}

Let us estimate the terms on the right-hand side of \eqref{eq:H-1norm_split}. For the second term, by the Plancherel theorem, we have
\begin{equation}
\label{eq:est_dA_large_rho}
\begin{aligned}
\int_{\R^n\setminus E(\rho)}\frac{|\cF(d\tilde{A})(\xi)|^2}{1+|\xi|^2}d\xi 
&\le C \int_{\R^n\setminus E(\rho)}\frac{|\cF(d\tilde{A})(\xi)|^2}{1+\rho^2}d\xi
\\
&\le \frac{C}{\rho^2}\|d\tilde{A}\|_{L^2(\R^n)}
\\
&\le \frac{C}{\rho^2}.
\end{aligned}
\end{equation}

We now turn to estimate the first term. Observe that the inequality $|\xi|^2\le 2\rho^2$ holds for all $\xi\in E(\rho)$. Hence, we get 
\[
e^{-\frac{\varepsilon^2}{\pi}\cdot \frac{|\xi'|^2}{h^2|\xi|^2}}\le e^{-\frac{\varepsilon^2}{\pi}\cdot \frac{|\xi'|^2}{2h^2\rho^2}}.
\]
Since the set $\{\xi\in \R^n: |\xi'|=0\}$ is of zero Lebesgue measure, we estimate the first term on the right-hand side of \eqref{eq:H-1norm_split} as follows:
\begin{equation}
\label{eq:est_dA_small_rho}
\begin{aligned}
&\int_{E(\rho)}\frac{|\cF(d\tilde{A})(\xi)|^2}{1+|\xi|^2}d\xi 
\\
&\le 
C[e^{\frac{18R}{h}}\|\Lambda_{A^{(1)}, q_1}^\Gamma-\Lambda_{A^{(2)}, q_2}^\Gamma\|^2
+
h^2
+
\varepsilon^{2\alpha}] \int_{E(\rho)}\frac{1}{1+|\xi|^2}d\xi
+ 
C\int_{E(\rho)}\frac{e^{-\frac{\varepsilon^2}{\pi}\cdot \frac{|\xi'|^2}{h^2\rho^2}}}{1+|\xi|^2}d\xi
\\
&\le C\rho^n[e^{\frac{18R}{h}}\|\Lambda_{A^{(1)}, q_1}^\Gamma-\Lambda_{A^{(2)}, q_2}^\Gamma\|^2
+
h^2+\varepsilon^{2\alpha}]
+C\int_{-\rho}^{\rho}\int_{B'(0,\rho)}\frac{e^{-\frac{\varepsilon^2}{\pi}\cdot \frac{|\xi'|^2}{h^2\rho^2}}}{1+|\xi|^2}d\xi'd\xi_n.
\end{aligned}
\end{equation}
where $B'(0,\rho)$ is a ball in $\R^{n-1}$ centered at the origin with radius $\rho$.

We next follow the arguments in \cite[Section 3]{Choudhury_Heck} closely to estimate the integral
\[
\int_{-\rho}^{\rho}\int_{B'(0,\rho)}\frac{e^{-\frac{\varepsilon^2}{\pi}\cdot \frac{|\xi'|^2}{h^2\rho^2}}}{1+|\xi|^2}d\xi'd\xi_n.
\]
To that end, by choosing $\varepsilon>0$ such that $\varepsilon=\sqrt{h}$ and using the polar coordinates, we obtain
\begin{equation}
\label{eq:est_dA_exp_term}
\begin{aligned}
\int_{-\rho}^{\rho}\int_{B'(0,\rho)}\frac{e^{-\frac{\varepsilon^2}{\pi}\cdot \frac{|\xi'|^2}{h^2\rho^2}}}{1+|\xi|^2}d\xi'd\xi_n 
&\le 
2\rho \int_{B'(0,\rho)} e^{-\frac{\varepsilon^2}{\pi}\cdot \frac{|\xi'|^2}{h^2\rho^2}}d\xi'
\\
&\le C\rho\int_0^\rho  r^{n-2}e^{-\frac{1}{\pi h\rho^2}r^2}dr
\\
&=C\rho^nh^{\frac{n-1}{2}}\int_0^{h^{-\frac{1}{2}}}  u^{n-2}e^{-\frac{1}{\pi}u^2}du
\\
&\le C\rho^nh^{\frac{n-1}{2}}\int_0^\infty u^{n-2}e^{-\frac{1}{\pi}u^2}du\le C\rho^nh^{\frac{n-1}{2}},
\end{aligned}
\end{equation}
where we have made a change of variables $r=h^{\frac{1}{2}}\rho u$.

Therefore, by combining estimates \eqref{eq:est_dA_large_rho}--\eqref{eq:est_dA_exp_term}, we deduce from \eqref{eq:H-1norm_split} that
\[
\|d\tilde{A}\|_{H^{-1}(\R^n)}^2\le C\left(\rho^ne^{\frac{18R}{h}}\|\Lambda_{A^{(1)}, q_1}^\Gamma-\Lambda_{A^{(2)}, q_2}^\Gamma\|^2+\rho^nh^2+\rho^nh^{\alpha}
+ \rho^nh^{\frac{n-1}{2}}+\frac{1}{\rho^2}\right).
\]
Furthermore, since $0<h\ll 1$, $\frac{n-1}{2}\ge 1$, and $0<\alpha<1$, we have
\[
\|d\tilde{A}\|_{H^{-1}(\R^n)}^2\le C\left(\rho^ne^{\frac{18R}{h}}\|\Lambda_{A^{(1)}, q_1}^\Gamma-\Lambda_{A^{(2)}, q_2}^\Gamma\|^2+\rho^nh^{\alpha}+\frac{1}{\rho^2}\right).
\]

Let us now choose $\rho$ such that $\rho^nh^{\alpha}=\frac{1}{\rho^2}$, i.e., $\rho=h^{-\frac{\alpha}{n+2}}$. Then we obtain
\[
\|d\tilde{A}\|_{H^{-1}(\R^n)}^2\le C\left(h^{-\frac{n\alpha}{n+2}}e^{\frac{18R}{h}}\|\Lambda_{A^{(1)}, q_1}^\Gamma-\Lambda_{A^{(2)}, q_2}^\Gamma\|^2+h^{\frac{2\alpha}{n+2}}\right).
\]
Since $\frac{n\alpha}{n+2}<2$, we get
\begin{equation}
\label{eq:est_dA_tilde}
\|d\tilde{A}\|_{H^{-1}(\R^n)}^2\le C\left(e^{\frac{20R}{h}}\|\Lambda_{A^{(1)}, q_1}^\Gamma-\Lambda_{A^{(2)}, q_2}^\Gamma\|^2+h^{\frac{2\alpha}{n+2}}\right).
\end{equation}
From here, the claimed estimate \eqref{eq:H-1_est_dA} follows immediately from \eqref{eq:est_dA_tilde} by an application of the inequality 
\[
\|dA\|_{H^{-1}(\Omega)}\le \|d\tilde{A}\|_{H^{-1}(\R^n)}.
\]
This completes the proof of Proposition \ref{prop:est_dA_H-1norm}.
\end{proof}

For later purposes we need an estimate for $\|dA\|_{L^\infty(\Omega)}$. Recall that we assumed an a priori bound for $\|A\|_{H^s(\Omega)}$, $s>\frac{n}{2}+1$, in the definition of the admissible set $\mathcal{A}(\Omega, M)$. Thus, there exists a constant $\eta>0$ such that $s=\frac{n}{2}+2\eta$. Hence, we apply the Sobolev embedding theorem and the interpolation theorem to derive the following estimate:
\begin{equation}
\label{eq:est_dA_Linfty}
\begin{aligned}
\|dA\|_{L^\infty(\Omega)}&\le C\|dA\|_{H^{\frac{n}{2}+ \eta}(\Omega)}
\\
&\le C\|dA\|_{H^{-1}(\Omega)}^{\frac{\eta}{1+s}} \|dA\|_{H^s(\Omega)}^{\frac{1+s-\eta}{1+s}}
\\
&\le C\|dA\|_{H^{-1}(\Omega)}^{\frac{\eta}{1+s}}
\\
&\le C\left(e^{\frac{10R}{h}}\|\Lambda_{A^{(1)}, q_1}^\Gamma-\Lambda_{A^{(2)}, q_2}^\Gamma\|^{\frac{\eta}{1+s}}+h^{\frac{\alpha \eta}{(n+2)(1+s)}}\right).
\end{aligned}
\end{equation}

\subsection{Estimating the magnetic potential}
In this subsection we verify estimate \eqref{eq:estimate_A_Linfty}, thus complete the proof of Theorem \ref{thm:estimate_A_Linfty}.  The key step is to apply a special Hodge decomposition of a vector field \cite[Theorem 3.3.2]{Sharafutdinov}.
\begin{lem}
\label{lem:vf_decomposition}
Let $\Omega\subseteq\R^n$, $n\ge 3$, be a bounded domain with smooth boundary $\p \Omega$, and let $A\in W^{1,\infty}(\Omega, \C^n)$ be a complex-valued vector field. Then there exist uniquely determined vector field $A^{\mathrm{sol}}\in W^{1,\infty}(\Omega, \C^n)$ and function $\varphi\in W^{2,\infty}(\Omega, \C)$ such that
\begin{equation}
\label{eq:vf_decomposition}
A=A^{\mathrm{sol}}+\nabla \varphi, \quad \div A^{\mathrm{sol}}=0, \quad \varphi|_{\p \Omega}=0.
\end{equation}
Furthermore, the following inequalities 
\begin{equation}
\label{eq:est_vf_decomposition}
\|\varphi\|_{W^{2,\infty}(\Omega)}\le C\|A\|_{W^{1,\infty}(\Omega)}, \quad \|A^{\mathrm{sol}}\|_{W^{1,\infty}(\Omega)}\le C\|A\|_{W^{1,\infty}(\Omega)},
\end{equation}
are valid, where the constant $C$ is independent of $A$.
\end{lem}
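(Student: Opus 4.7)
The plan is to realize $\varphi$ as the unique solution of a Dirichlet problem for Poisson's equation. Formally, taking the divergence of $A = A^{\mathrm{sol}} + \nabla\varphi$ and imposing $\div A^{\mathrm{sol}} = 0$ forces
\[
\Delta \varphi = \div A \quad \text{in } \Omega, \qquad \varphi|_{\p\Omega} = 0,
\]
which uniquely determines $\varphi$; once $\varphi$ is in hand, one simply sets $A^{\mathrm{sol}} := A - \nabla\varphi$.

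First I would establish existence of $\varphi$. Since $A \in W^{1,\infty}(\Omega, \C^n)$, we have $\div A \in L^\infty(\Omega) \subset L^2(\Omega)$. Applying the Lax--Milgram theorem to the standard Dirichlet bilinear form on $H^1_0(\Omega)$ (component-wise on real and imaginary parts) yields a unique weak solution $\varphi \in H^1_0(\Omega)$ with $\|\varphi\|_{H^1(\Omega)} \le C \|\div A\|_{L^2(\Omega)}$. Smoothness of $\p\Omega$ together with elliptic $L^p$-theory then lifts this to $\varphi \in W^{2,p}(\Omega)$ for every finite $p$; the sharper statement $\varphi \in W^{2,\infty}(\Omega)$ with $\|\varphi\|_{W^{2,\infty}(\Omega)} \le C\|A\|_{W^{1,\infty}(\Omega)}$ is precisely the content of \cite[Theorem 3.3.2]{Sharafutdinov}, which I would quote.

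With $\varphi$ in hand I would define $A^{\mathrm{sol}} := A - \nabla\varphi \in W^{1,\infty}(\Omega,\C^n)$. By construction $\div A^{\mathrm{sol}} = \div A - \Delta \varphi = 0$, and the norm estimate
\[
\|A^{\mathrm{sol}}\|_{W^{1,\infty}(\Omega)} \le \|A\|_{W^{1,\infty}(\Omega)} + \|\nabla \varphi\|_{W^{1,\infty}(\Omega)} \le C\|A\|_{W^{1,\infty}(\Omega)}
\]
follows from the previous step. For uniqueness, if $A = B + \nabla \psi$ is another decomposition with $\div B = 0$ and $\psi|_{\p\Omega}=0$, subtracting yields $\nabla(\varphi - \psi) = A^{\mathrm{sol}} - B$; taking divergence gives $\Delta(\varphi - \psi) = 0$ in $\Omega$, and combined with $(\varphi - \psi)|_{\p\Omega} = 0$, uniqueness for the homogeneous Dirichlet problem forces $\varphi = \psi$, whence $B = A^{\mathrm{sol}}$.

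The main technical obstacle is the $W^{2,\infty}$-regularity of $\varphi$: for $\Delta \varphi = f$ with $f$ only in $L^\infty(\Omega)$, Calder\'on--Zygmund theory produces $\varphi \in W^{2,p}(\Omega)$ for every finite $p$ but in general \emph{not} for $p = \infty$, and classical counterexamples show that one cannot expect boundedness of second derivatives from a merely bounded right-hand side. Here, however, the source carries the extra structure $f = \div A$ with $A \in W^{1,\infty}$, and it is exactly this structural feature, combined with the smoothness of $\p \Omega$, that is exploited in the proof of \cite[Theorem 3.3.2]{Sharafutdinov} to close the gap; I would invoke that result directly rather than reproducing its proof.
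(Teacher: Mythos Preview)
Your proposal is correct and matches the paper's approach: the paper does not give a proof of this lemma at all but simply quotes \cite[Theorem 3.3.2]{Sharafutdinov}, and immediately after the statement observes (exactly as you do) that $\varphi$ solves the Dirichlet problem $\Delta\varphi=\div A$ in $\Omega$, $\varphi|_{\p\Omega}=0$. Your write-up is in fact more detailed than the paper's treatment, but the substance---construct $\varphi$ via the Dirichlet problem, set $A^{\mathrm{sol}}=A-\nabla\varphi$, and defer the delicate $W^{2,\infty}$-regularity to Sharafutdinov---is identical.
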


In particular, the function $\varphi$ in Lemma \ref{lem:vf_decomposition} satisfies
\[
\begin{cases}
\Delta \varphi=\div A \quad \text{in} \quad \Omega,\\
\varphi=0 \quad \text{on} \quad \p \Omega.
\end{cases}
\]
Hence, by the Morrey's inequality and \cite[Lemma 6.2]{Tzou_stability}, we get from estimate \eqref{eq:est_dA_Linfty} that
\begin{equation}
\label{eq:est_solenoidal}
\|A^{\mathrm{sol}}\|_{L^\infty(\Omega)}
\le 
C\|dA\|_{L^\infty(\Omega)}
\le 
C\left(e^{\frac{10R}{h}}\|\Lambda_{A^{(1)}, q_1}^\Gamma-\Lambda_{A^{(2)}, q_2}^\Gamma\|^{\frac{\eta}{1+s}}+h^{\frac{\alpha \eta}{(n+2)(1+s)}}\right),
\end{equation}
where $s=\frac{n}{2}+2\eta$.

Thanks to \eqref{eq:est_solenoidal}, we may estimate $\|A\|_{L^\infty(\Omega)}$ by
\[
\|A\|_{L^\infty(\Omega)}
\le 
\|A^{\mathrm{sol}}\|_{L^\infty(\Omega)}+\|\nabla \varphi\|_{L^\infty(\Omega)} 
\le 
\|dA\|_{L^\infty(\Omega)}+\|\nabla \varphi\|_{L^\infty(\Omega)}.
\]
In order to verify estimate \eqref{eq:estimate_A_Linfty}, we need to estimate $\|\nabla \varphi\|_{L^\infty(\Omega)}$. This is accomplished in the following lemma.
\begin{lem}
\label{lem:est_test_function_gradient}
Let $\varphi\in W^{2,\infty}(\Omega)$ be the function from Lemma \ref{lem:vf_decomposition}, and let $\eta>0$ be a constant such that $s=\frac{n}{2}+2\eta$. Then there exists a constant $h_0>0$ such that the estimate
\begin{equation}
\label{eq:Linfty_norm_gradient_test_function}
\|\nabla \varphi\|_{L^\infty(\Omega)} 
\le 
Ce^{\frac{11R}{h}}\|\Lambda_{A^{(1)}, q_1}^\Gamma-\Lambda_{A^{(2)}, q_2}^\Gamma\|^{\frac{\eta^2}{(1+s)^2}}+Ch^{\frac{2\alpha \eta^2}{(n+2)^2(1+s)^2}}
\end{equation}
holds for all $h<h_0$ and some $\alpha \in (0,1)$. Here $R>0$ is a constant such that $\Omega \subseteq B(0,R)$.
\end{lem}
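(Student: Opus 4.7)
The plan is to derive an estimate for $\varphi$ in a weak Sobolev norm via the CGO machinery applied to the integral inequality \eqref{eq:int_id_est_hyp}, and then to boost this to an $L^\infty$ bound on $\nabla\varphi$ via Sobolev embedding and interpolation, reusing the scheme that produced \eqref{eq:est_dA_Linfty}. I would begin by decomposing $A^{(2)}-A^{(1)} = A^{\mathrm{sol}} + \nabla\varphi$ via Lemma \ref{lem:vf_decomposition} and substituting into \eqref{eq:int_id_est_hyp}. The $A^{\mathrm{sol}}$ contribution is controlled by \eqref{eq:est_solenoidal}, whereas the $\nabla\varphi$ contribution is handled by integration by parts (using $\varphi|_{\p \Omega}=0$) to obtain
\[
\int_\Omega \nabla\varphi\cdot Du_2\,\overline{v}\,dx
= -\frac{1}{i}\int_\Omega \varphi\,\big(\Delta u_2\,\overline{v} + \nabla u_2\cdot \nabla\overline{v}\big)\,dx,
\]
so that the unknown $\varphi$ now appears as a scalar factor in the integrand, ready to be probed by CGO.

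The next step is to substitute the explicit CGO solutions \eqref{eq:CGO_v_explicit_hyp} and \eqref{eq:CGO_u2_explicit_hyp} (with amplitudes $a_1=a_2=1$, which trivially satisfy the transport equations in \eqref{eq:trans_eq_u1_u2}) and analyse the leading behaviour as $h\to 0$. Two cancellations make the argument work: since $\zeta_2\cdot\zeta_2=0$, the $h^{-2}$ contribution to $\Delta u_2$ vanishes identically, leaving only terms involving $\nabla r_2$ and $\Delta r_2$, which are controlled by \eqref{eq:est_remainder}; and since $\zeta_2\cdot\overline{\zeta_1}=-h^2|\xi|^2/2$, the quantity $\nabla u_2\cdot \nabla\overline{v}$ has finite leading order $-\frac{|\xi|^2}{2}\,e^{-ix\cdot\xi}$ plus reflection terms carrying the phases $\xi_\pm$ and $\xi^\ast$ listed in \eqref{eq:sum_diff_exp}. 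These together extract $\widehat{\widetilde{\varphi}}(\xi)$ (for a suitable extension of $\varphi$ across $\Gamma_0$) up to error terms, the reflection terms being controlled by Lemma \ref{lem:Riemann_Lebesgue}. Following the Fourier splitting $\R^n = E(\rho)\cup(\R^n\setminus E(\rho))$ and parameter optimisation used in the proof of Proposition \ref{prop:est_dA_H-1norm}, I would deduce a bound of the form
\[
\|\varphi\|_{H^{-1}(\Omega)}
\le C\big(e^{CR/h}\|\Lambda_{A^{(1)},q_1}^\Gamma-\Lambda_{A^{(2)},q_2}^\Gamma\| + h^{\beta}\big) + C\|A^{\mathrm{sol}}\|_{L^\infty(\Omega)}
\]
for some $\beta>0$, and then absorb the $A^{\mathrm{sol}}$ term using \eqref{eq:est_solenoidal}.

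To conclude, I would apply the same Sobolev embedding plus interpolation scheme used to derive \eqref{eq:est_dA_Linfty}. The a priori bound $\|A^{(j)}\|_{H^s(\Omega)}\le M$ with $s=\frac{n}{2}+2\eta$, combined with elliptic regularity applied to $\Delta\varphi=\mathrm{div}(A^{(2)}-A^{(1)})$ with the boundary condition $\varphi|_{\p \Omega}=0$, yields $\|\nabla\varphi\|_{H^s(\Omega)}\le CM$. Interpolating $\|\nabla\varphi\|_{H^{n/2+\eta}(\Omega)}$ between $\|\nabla\varphi\|_{H^{-1}(\Omega)}$ (which is bounded by $\|\varphi\|_{L^2(\Omega)}$ and hence by the preceding estimate) and $\|\nabla\varphi\|_{H^s(\Omega)}$, and then invoking the Sobolev embedding $H^{n/2+\eta}\hookrightarrow L^\infty$, produces \eqref{eq:Linfty_norm_gradient_test_function}; the exponent $\eta^2/(1+s)^2$ arises precisely because the $\eta/(1+s)$ factor inherited from $\|A^{\mathrm{sol}}\|_{L^\infty}$ via \eqref{eq:est_solenoidal} combines multiplicatively with a second $\eta/(1+s)$ factor coming from this final interpolation step. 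The principal technical obstacle is the careful bookkeeping of the error terms in the CGO substitution, in particular verifying that the $\Delta u_2$ and remainder contributions do not destroy the extraction of $\widehat{\widetilde{\varphi}}(\xi)$, and that the two $\eta/(1+s)$ factors combine cleanly with the correct constants in front of the exponential and logarithmic terms.
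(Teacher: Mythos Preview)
Your overall plan (extract $\widehat{\tilde\varphi}(\xi)$ from the CGO machinery, run the Fourier splitting, then interpolate) is the right one, but the specific extraction step you propose has two genuine problems that do not appear in the paper's argument.

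First, after your integration by parts the reflection cross-terms in $\nabla u_2\cdot\nabla\overline{v}$ are \emph{not} of finite order. A direct computation gives $\zeta_2\cdot\overline{\zeta_1}=-h^2|\xi|^2/2$ for the diagonal pairing, but for the mixed pairing one finds
\[
\zeta_2^{*}\cdot\overline{\zeta_1}
=\zeta_2\cdot\overline{\zeta_1}-2\zeta_{2,n}\overline{\zeta_{1,n}}
\longrightarrow -\,\frac{2|\xi'|^2}{|\xi|^2}\quad(h\to 0),
\]
so the corresponding coefficient in $\nabla u_2\cdot\nabla\overline{v}$ is of order $h^{-2}$, not $O(1)$. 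Lemma~\ref{lem:Riemann_Lebesgue} applied to $\widehat{\varphi}(\xi_\pm)$ only gives the bound $C(e^{-c/h}+\varepsilon^\alpha)$, and with $\varepsilon=\sqrt h$ the product $h^{-2}\varepsilon^\alpha=h^{\alpha/2-2}$ blows up. So the reflection terms are not controlled by a direct appeal to Riemann--Lebesgue as you claim. There is a hidden cancellation with the $\Delta u_2\,\overline{v}$ contribution (the identity you wrote down forces the total to be finite), but making this explicit is non-trivial and is not addressed in your outline.

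Second, even on the diagonal the term you extract is $-\tfrac{|\xi|^2}{2}\,\widehat{\tilde\varphi}(\xi)$, not $\widehat{\tilde\varphi}(\xi)$. The extra factor $|\xi|^2$ makes the Fourier splitting fail: on $E(\rho)$ one would need to integrate $|\xi|^{-4}$ near the origin, which is not locally integrable when $n\le 4$.

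The paper avoids both difficulties by a different, and simpler, device: it does \emph{not} integrate by parts at the level of the CGO solutions. Instead it first passes to the limit (using the analysis already carried out in Proposition~\ref{prop:est_dA_H-1norm}) to reach the clean estimate
\[
\Big|(\mu^{(1)}-i\mu^{(2)})\cdot\int_{\Omega\cup\Omega^*}\tilde A\,e^{-ix\cdot\xi}\,\overline{a_1}a_2\,dx\Big|
\le C\big(e^{9R/h}\|\Lambda_1-\Lambda_2\|+e^{-\varepsilon^2|\xi'|^2/(\pi h^2|\xi|^2)}+\varepsilon^\alpha+h\big),
\]
in which all reflection issues are already absorbed. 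Then it chooses $a_1=1$ but $a_2$ solving the \emph{inhomogeneous} transport equation $((\mu^{(1)}-i\mu^{(2)})\cdot\nabla)a_2=1$; with the Hodge splitting $\tilde A=\tilde A^{\mathrm{sol}}+\nabla\tilde\varphi$ and the orthogonality $(\mu^{(1)}-i\mu^{(2)})\cdot\xi=0$, one integration by parts in the scalar variable $(\mu^{(1)}-i\mu^{(2)})\cdot x$ yields exactly $-\widehat{\tilde\varphi}(\xi)$, with no $|\xi|^2$ prefactor and no further reflection terms to control. This is the missing idea in your proposal.
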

\begin{proof}
We shall follow similar ideas as in the proof of \cite[Lemma 3.6]{Ma_Liu_stability}. We extend $\varphi$ by zero to $\R^n\setminus \Omega$ and denote the extension by the same letter. Our starting point is the analysis of the integrals appearing in \eqref{eq:int_id_substitute_A} in the limit $h\to 0$. By arguing similarly as in the proof of Proposition \ref{prop:est_dA_H-1norm}, we get
\begin{equation}
\label{eq:I1I2_est}
\bigg|(\mu^{(1)}-i\mu^{(2)})\cdot \int_{\Omega \cup \Omega^\ast}  \tilde A e^{-ix\cdot \xi}\overline{a_1}a_2dx\bigg|
\le 
C\big(e^{\frac{9R}{h}}\|\Lambda_{A^{(1)}, q_1}^\Gamma-\Lambda_{A^{(2)}, q_2}^\Gamma\|+e^{-\frac{\varepsilon^2}{\pi}\cdot \frac{|\xi'|^2}{h^2|\xi|^2}}+\varepsilon^\alpha+h\big).
\end{equation}

Let us now choose the amplitudes $a_1=1$ and $a_2$ such that
\begin{equation}
\label{eq:transport_a2}
((\mu^{(1)} - i\mu^{(2)}) \cdot \nabla)a_2(x, \mu^{(1)} - i\mu^{(2)})=1 \quad \text{ in } \quad \Omega.
\end{equation}
It is straightforward to verify that this choice of $a_1$ and $a_2$ satisfies the respective transport equation in \eqref{eq:trans_eq_u1_u2}.

In view of Lemma \ref{lem:vf_decomposition} applied to the domain $\Omega \cup \Omega^\ast$, we may write $\tilde A=\tilde A^{\mathrm{sol}}+\nabla \tilde \varphi$, where the function $\tilde \varphi \in W^{2,\infty}(\Omega \cup \Omega^\ast)$ satisfies $\tilde \varphi|_{\p (\Omega \cup \Omega^\ast)}=0$. This allows us to rewrite the left-hand side of \eqref{eq:I1I2_est} as
\begin{align*}
&(\mu^{(1)}-i\mu^{(2)})\cdot \int_{\Omega \cup \Omega^\ast}  \tilde A e^{-ix\cdot \xi}a_2dx
\\
&= (\mu^{(1)}-i\mu^{(2)})\cdot \int_{\Omega \cup \Omega^\ast}  \tilde A^{\mathrm{sol}} e^{-ix\cdot \xi}a_2dx
+
(\mu^{(1)}-i\mu^{(2)})\cdot \int_{\Omega \cup \Omega^\ast}  \nabla \tilde  \varphi e^{-ix\cdot \xi}a_2dx.
\end{align*}

Since $a_2\in C^\infty(\overline{\Omega})$, we get
\[
\bigg|(\mu^{(1)}-i\mu^{(2)})\cdot \int_{\Omega \cup \Omega^\ast}  \tilde A^{\mathrm{sol}} e^{-ix\cdot \xi}a_2dx\bigg|\le C\|\tilde{A}^{\mathrm{sol}}\|_{L^\infty(\Omega \cup \Omega^\ast)}.
\]
On the other hand, as the amplitude $a_2$ satisfies \eqref{eq:transport_a2}, the function $\tilde \varphi$ satisfies $\tilde \varphi|_{\p (\Omega \cup \Omega^\ast)}=0$, and $\mu^{(1)}\cdot \xi=\mu^{(2)}\cdot \xi=0$, we integrate by parts to obtain
\[
(\mu^{(1)}-i\mu^{(2)})\cdot \int_{\Omega \cup \Omega^\ast}  \nabla \tilde{\varphi} e^{-ix\cdot \xi}a_2dx=-\int_{\Omega \cup \Omega^\ast} \tilde \varphi e^{-ix\cdot \xi}dx=-\mathcal{F}(\tilde \varphi)(\xi).
\]
Therefore, it follows from estimate \eqref{eq:I1I2_est} that
\[
|\mathcal{F}(\tilde \varphi)(\xi)|
\le 
C\big(e^{\frac{9R}{h}}\|\Lambda_{A^{(1)}, q_1}^\Gamma-\Lambda_{A^{(2)}, q_2}^\Gamma\|+h+e^{-\frac{\varepsilon^2}{\pi}\cdot \frac{|\xi'|^2}{h^2|\xi|^2}}+\varepsilon^\alpha+\|\tilde{A}^{\mathrm{sol}}\|_{L^\infty(\Omega \cup \Omega^\ast)}\big).
\]

To estimate $\|\tilde{A}^{\mathrm{sol}}\|_{L^\infty(\Omega \cup \Omega^\ast)}$, by the Morrey's inequality, we have
\[
\|\tilde{A}^\mathrm{sol}\|_{L^\infty(\Omega \cup \Omega^\ast)}\le \|d\tilde A\|_{L^\infty(\R^n)}.
\]
On the other hand, by arguing similarly as in estimate \eqref{eq:est_dA_Linfty}, we get from estimate  \eqref{eq:est_dA_tilde} that
\begin{equation}
\label{eq:est_dA_tilde_Linfty}
\|d\tilde A\|_{L^\infty(\R^n)}
\le 
C\left(e^{\frac{10R}{h}}\|\Lambda_{A^{(1)}, q_1}^\Gamma-\Lambda_{A^{(2)}, q_2}^\Gamma\|^{\frac{\eta}{1+s}}
+
h^{\frac{\alpha \eta}{(n+2)(1+s)}}\right).
\end{equation}
Due to the inequalities $0<h\ll 1$ and $\frac{\alpha \eta}{(n+2)(1+s)}<1$, we obtain
\begin{equation}
\label{eq:est_Fourier_phi}
|\mathcal{F}(\tilde \varphi)(\xi)|
\le 
C\left(e^{\frac{10R}{h}}\|\Lambda_{A^{(1)}, q_1}^\Gamma-\Lambda_{A^{(2)}, q_2}^\Gamma\|^{\frac{\eta}{1+s}}+h^{\frac{\alpha \eta}{(n+2)(1+s)}}+e^{-\frac{\varepsilon^2}{\pi}\cdot \frac{|\xi'|^2}{h^2|\xi|^2}}+\varepsilon^\alpha\right).
\end{equation}

We next proceed similarly as in the proof of Proposition \ref{prop:est_dA_H-1norm} to estimate $\|\varphi\|_{H^{-1}(\Omega)}$. Let $\rho$ be a parameter that we shall choose later. Then we apply the Parseval's formula to write
\[
\|\tilde\varphi\|_{H^{-1}(\R^n)}^2\le \int_{E(\rho)}\frac{|\cF(\tilde\varphi)(\xi)|^2}{1+|\xi|^2}d\xi +\int_{\R^n\setminus E(\rho)}\frac{|\cF(\tilde\varphi)(\xi)|^2}{1+|\xi|^2}d\xi,
\]
where the set $E(\rho)$ is given by \eqref{eq:def_Erho}. By following the same computations as in estimate \eqref{eq:est_dA_large_rho}, we have
\begin{equation}
\label{eq:est_phi_large_rho}
\int_{\R^n\setminus E(\rho)}\frac{|\cF(\tilde\varphi)(\xi)|^2}{1+|\xi|^2}d\xi
\le
 \frac{C}{\rho^2}.
\end{equation}

By choosing $\varepsilon=\sqrt{h}$, utilizing estimate \eqref{eq:est_dA_exp_term}, in conjunction with the inequalities $0<h\ll 1$, $\frac{n-1}{2}\ge 1$, and $\frac{\eta}{(n+2)(1+s)}<1$, we deduce from \eqref{eq:est_Fourier_phi} that
\begin{equation}
\label{eq:est_phi_small_rho}
\int_{E(\rho)}\frac{|\cF(\tilde\varphi)(\xi)|^2}{1+|\xi|^2}d\xi 
\le 
C\rho^n\left(e^{\frac{20R}{h}}\|\Lambda_{A^{(1)}, q_1}^\Gamma-\Lambda_{A^{(2)}, q_2}^\Gamma\|^{\frac{2\eta}{1+s}}+h^{\frac{2\alpha \eta}{(n+2)(1+s)}}\right)
\end{equation}
for some $\alpha\in (0,1)$. 
Combining \eqref{eq:est_phi_large_rho} and \eqref{eq:est_phi_small_rho}, we conclude that
\[
\|\tilde\varphi\|_{H^{-1}(\R^n)}^2\le C\left(\rho^ne^{\frac{20R}{h}}\|\Lambda_{A^{(1)}, q_1}^\Gamma-\Lambda_{A^{(2)}, q_2}^\Gamma\|^{\frac{2\eta}{1+s}}+\rho^nh^{\frac{2\alpha \eta}{(n+2)(1+s)}}+\frac{1}{\rho^2}\right).
\]

Choosing $\rho>0$ such that $\rho^nh^{\frac{2\alpha \eta}{(n+2)(1+s)}}=\frac{1}{\rho^2}$, namely, $\rho=h^{-\frac{2\alpha \eta}{(n+2)^2(1+s)}}$, we get
\begin{align*}
\|\tilde\varphi\|_{H^{-1}(\R^n)}^2 
\le 
&C\left(h^{-\frac{2n\alpha \eta}{(n+2)^2(1+s)}}e^{\frac{20R}{h}}\|\Lambda_{A^{(1)}, q_1}^\Gamma-\Lambda_{A^{(2)}, q_2}^\Gamma\|^{\frac{2\eta}{1+s}}
+h^{\frac{4\alpha \eta}{(n+2)^2(1+s)}}\right)
\\
\le 
&C\left(e^{\frac{22R}{h}}\|\Lambda_{A^{(1)}, q_1}^\Gamma-\Lambda_{A^{(2)}, q_2}^\Gamma\|^{\frac{2\eta}{1+s}}+h^{\frac{4\alpha \eta}{(n+2)^2(1+s)}}\right).
\end{align*}
Here we have utilized the inequality $\frac{2n\alpha \eta}{(n+2)^2(1+s)}<2$.
Thus, by similar arguments as the end of the proof of Proposition \ref{prop:est_dA_H-1norm}, we have
\begin{equation}
\label{eq:est_dphi_H-1}
\|\tilde\varphi\|_{H^{-1}(\R^n)} 
\le 
C\left(e^{\frac{11R}{h}}\|\Lambda_{A^{(1)}, q_1}^\Gamma-\Lambda_{A^{(2)}, q_2}^\Gamma\|^{\frac{\eta}{1+s}}+h^{\frac{2\alpha \eta}{(n+2)^2(1+s)}}\right).
\end{equation}

Since $A=\tilde A$ in $\Omega$, we see that $\varphi|_{\Gamma}=\tilde \varphi|_{\Gamma}=0$. Furthermore, as $\varphi|_{\Gamma_0}=0$, we obtain the inequality
\[
\|\varphi\|_{H^{-1}(\Omega)}\le \|\tilde\varphi\|_{H^{-1}(\R^n)},
\]
which implies
\[
\|\varphi\|_{H^{-1}(\Omega)}
\le 
C\left(e^{\frac{11R}{h}}\|\Lambda_{A^{(1)}, q_1}^\Gamma-\Lambda_{A^{(2)}, q_2}^\Gamma\|^{\frac{\eta}{1+s}}+h^{\frac{2\alpha \eta}{(n+2)^2(1+s)}}\right).
\]

From here, we have the claimed estimate \eqref{eq:Linfty_norm_gradient_test_function} by similar computations as in estimate \eqref{eq:est_dA_Linfty}.
This completes the proof of Lemma \ref{lem:est_test_function_gradient}.
\end{proof}

We are now ready to verify estimate \eqref{eq:estimate_A_Linfty}. In view of estimates \eqref{eq:est_solenoidal} and \eqref{eq:Linfty_norm_gradient_test_function}, we have
\begin{equation}
\label{eq:est_Linfty_A}
\begin{aligned}
\|A\|_{L^\infty(\Omega)} 
\le & 
\|A^{\mathrm{sol}}\|_{L^\infty(\Omega)}+\|\nabla \varphi\|_{L^\infty(\Omega)}
\\
\le &C\left(e^{\frac{10R}{h}}\|\Lambda_{A^{(1)}, q_1}^\Gamma-\Lambda_{A^{(2)}, q_2}^\Gamma\|^{\frac{\eta}{1+s}}
+
h^{\frac{\alpha \eta}{(n+2)(1+s)}}
+e^{\frac{11R}{h}}\|\Lambda_{A^{(1)}, q_1}^\Gamma-\Lambda_{A^{(2)}, q_2}^\Gamma\|^{\frac{\eta^2}{(1+s)^2}}\right.
\\
&\left.+h^{\frac{2\alpha \eta^2}{(n+2)^2(1+s)^2}}\right)
\\
\le &C\left(e^{\frac{11R}{h}}\|\Lambda_{A^{(1)}, q_1}^\Gamma-\Lambda_{A^{(2)}, q_2}^\Gamma\|^{\frac{\eta^2}{(1+s)^2}}+h^{\frac{2\alpha \eta^2}{(n+2)^2(1+s)^2}}\right)
\\
\le &C\left(e^{\frac{11R}{h^{1/\alpha}}}\|\Lambda_{A^{(1)}, q_1}^\Gamma-\Lambda_{A^{(2)}, q_2}^\Gamma\|^{\frac{\eta^2}{(1+s)^2}}+h^{\frac{2\alpha \eta^2}{(n+2)^2(1+s)^2}}\right),
\end{aligned}
\end{equation}
where we have used the inequality $\frac{1}{h}< \frac{1}{h^{\frac{1}{\alpha}}}$ in the last step.

Let $\tilde h_0=\min \{h_0, \varepsilon_0^2\}$ and $\delta = (e^{-\frac{11R}{\tilde{h}_0^{1/\alpha}}})^{\frac{2(1+s)^2}{\eta^2}}$. Let us assume first that $\|\Lambda_{A^{(1)}, q_1}^\Gamma-\Lambda_{A^{(2)}, q_2}^\Gamma\| < \delta$. We then choose
\begin{equation}
\label{eq:power_h_alpha}
h= \bigg\{\frac{1}{11R} \big|\log \|\Lambda_{A^{(1)}, q_1}^\Gamma-\Lambda_{A^{(2)}, q_2}^\Gamma\|\big|\bigg\}^{-\frac{\alpha \eta^2}{2(1+s)^2}}.
\end{equation}
With this choice of $h$, we have
\[
\frac{1}{h^{\frac{1}{\alpha}}}=\frac{1}{11R} \big|\log \|\Lambda_{A^{(1)}, q_1}^\Gamma-\Lambda_{A^{(2)}, q_2}^\Gamma \|\big|^{\frac{\eta^2}{2(1+s)^2}}.
\]
Hence, by substituting \eqref{eq:power_h_alpha} into \eqref{eq:est_Linfty_A}, we get the estimate
\begin{equation}
\label{eq:estimate_A_hyp}
\begin{aligned}
\|A\|_{L^\infty(\Omega)}
&\le C\|\Lambda_{A^{(1)}, q_1}^\Gamma-\Lambda_{A^{(2)}, q_2}^\Gamma\|^{\frac{\eta^2}{2(1+s)^2}}+C \big|\log \|\Lambda_{A^{(1)}, q_1}^\Gamma-\Lambda_{A^{(2)}, q_2}^\Gamma\|\big|^{-\frac{\alpha^2 \eta^4}{(n+2)^2(1+s)^4}}.
\end{aligned}
\end{equation}

When $\|\Lambda_{A^{(1)}, q_1}^\Gamma-\Lambda_{A^{(2)}, q_2}^\Gamma\| \ge \delta$, we have 
\[
\|A\|_{L^\infty(\Omega)} 
\le 
2M
\le 
\frac{2CM}{\delta^{\frac{\eta^2}{2(1+s)^2}}}\delta^{\frac{\eta^2}{2(1+s)^2}}\le \frac{2CM}{\delta^{\frac{\eta^2}{2(1+s)^2}}}\|\Lambda_{A^{(1)}, q_1}^\Gamma-\Lambda_{A^{(2)}, q_2}^\Gamma\|^{\frac{\eta^2}{2(1+s)^2}}.
\]
Thus, we have verified estimate \eqref{eq:estimate_A_Linfty}. 

To complete the proof of Theorem \ref{thm:estimate_A_Linfty}, we need to verify that the choice of $h$ in \eqref{eq:power_h_alpha} yields the inequalities $h<h_0$ and $1-\frac{h^2}{4}|\xi|^2>0$ for $\xi=(\xi', \xi_n)\in \R^n$ such that $|\xi'|<\rho$ and $\xi_n<\rho$. To verify $h<h_0$, we have the following inequalities:
\begin{align*}
\|\Lambda_{A^{(1)}, q_1}^\Gamma-\Lambda_{A^{(2)}, q_2}^\Gamma\| &<\delta = (e^{-\frac{11R}{\tilde{h}_0^{1/\alpha}}})^{\frac{2(1+s)^2}{\eta^2}}\ll 1
\\
\implies  \|\Lambda_{A^{(1)}, q_1}^\Gamma-\Lambda_{A^{(2)}, q_2}^\Gamma\|^{\frac{\eta^2}{2(1+s)^2}} &< e^{-\frac{11R}{\tilde{h}_0^{1/\alpha}}}
\\
\implies \log  \|\Lambda_{A^{(1)}, q_1}^\Gamma-\Lambda_{A^{(2)}, q_2}^\Gamma\|^{\frac{\eta^2}{2(1+s)^2}} &<-\frac{11R}{\tilde{h}_0^{1/\alpha}}
\\
\implies \big| \log \|\Lambda_{A^{(1)}, q_1}^\Gamma-\Lambda_{A^{(2)}, q_2}^\Gamma\| \big|^{\frac{\eta^2}{2(1+s)^2}} &>\frac{11R}{\tilde{h}_0^{1/\alpha}}
\\
\implies \frac{1 }{11R} \big| \log \|\Lambda_{A^{(1)}, q_1}^\Gamma-\Lambda_{A^{(2)}, q_2}^\Gamma\| \big|^{\frac{\eta^2}{2(1+s)^2}} &>\frac{1}{\tilde{h}_0^{1/\alpha}}
\\
\implies \frac{1}{h^{\frac{1}{\alpha}}}&>\frac{1}{\tilde{h}_0^{1/\alpha}}
\\
\implies h&<\tilde h_0<h_0.
\end{align*}

We now turn to check the inequality $1-\frac{h^2}{4}|\xi|^2>0$. To that end, we have
\[
h^2\frac{|\xi|^2}{4}<h^2\frac{\rho^2}{2}<\frac{1}{2}h^2h^{-\frac{2\alpha \eta}{(n+2)^2(1+s)}}=\frac{1}{2}h^{2-\frac{2\alpha \eta}{(n+2)^2(1+s)}}<1,
\]
where we have utilized the inequalities $|\xi|^2<2\rho^2$ and $\frac{2\alpha \eta}{(n+2)^2(1+s)}<2$. This completes the proof of Theorem \ref{thm:estimate_A_Linfty}.

\section{Estimates for the Electric Potential}
\label{sec:proof_elec_term}

This section is devoted to proving the estimates for the electric potential, namely, \eqref{eq:estimate_q_hyp} and \eqref{eq:estimate_q_Linfty}. For convenience, in what follows we shall again denote $A=A^{(2)}-A^{(1)}$ and $q=q_2-q_1$. We begin this section by recalling the integral identity \eqref{eq:int_id_A_hyp}:
\[
\int_\Omega (A\cdot Du_2+qu_2)\overline{v}dx
=
\int_{\Gamma}\p_\nu (\Delta (u_1-u_2))\overline{v}dS
+\int_{\Gamma}\p_\nu (u_1-u_2)(\overline{\Delta v})dS.
\]
Here $v$ is a solution to the equation $\mathcal{L}^\ast_{A^{(1)},q_1}v=0$ in $\Omega$, and $u_j$, $j=1,2$, satisfies the equation $\mathcal{L}_{A^{(j)},q_j}u_j=0$ in $\Omega$.

Let us now estimate the right-hand side of the integral identity above. To that end, we apply Lemma \ref{lem:est_int_id} to obtain
\[
\bigg|\int_{\Gamma}\p_\nu (\Delta (u_1-u_2))\overline{v}dS
+\int_{\Gamma}\p_\nu (u_1-u_2)(\overline{\Delta v})dS\bigg| \le Ce^{\frac{9R}{h}}\|\Lambda_{A^{(1)}, q_1}^\Gamma-\Lambda_{A^{(2)}, q_2}^\Gamma\|.
\]

To estimate the left-hand side of the integral identity, by the Cauchy-Schwartz inequality, estimate \eqref{eq:est_solutions_hyp}, and the inequality $\frac{1}{h}\le e^{\frac{R}{h}}$, we have
\begin{align*}
\bigg|\int_\Omega A\cdot Du_2\overline{v}dx\bigg|
&\le 
\|A\|_{L^\infty(\Omega)} \|Du_2\|_{L^2(\Omega)} \|v\|_{L^2(\Omega)}
\\
&\le C \|A\|_{L^\infty(\Omega)} \cdot \frac{1}{h^4}e^{\frac{2R}{h}} \cdot \frac{1}{h}e^{\frac{2R}{h}}
\\
&\le Ce^{\frac{9R}{h}} \|A\|_{L^\infty(\Omega)}.
\end{align*}
Hence, by combining the previous two estimates, we obtain
\begin{equation}
\label{eq:est_q}
\bigg|\int_\Omega qu_2\overline{v}dx\bigg|
\le 
Ce^{\frac{9R}{h}} \big(\|A\|_{L^\infty(\Omega)}+\|\Lambda_{A^{(1)}, q_1}^\Gamma-\Lambda_{A^{(2)}, q_2}^\Gamma\|\big).
\end{equation}

We next substitute the CGO solutions \eqref{eq:CGO_v_explicit_hyp} and \eqref{eq:CGO_u2_explicit_hyp} into the left-hand side of \eqref{eq:est_q} and pass to the limit $h\to 0$. To that end, by a direct computation, we get
\begin{equation}
\label{eq:q_cgo_sub}
\begin{aligned}
&\int_\Omega q u_2\overline{v}dx
\\
&= \int_\Omega q (e^{-ix\cdot \xi}\overline{a_1}a_2+e^{-ix^\ast \cdot \xi}\overline{a_1^\ast} a_2^\ast) dx
+
\int_\Omega q(-e^{-ix\cdot \xi_-}\overline{a_1^\ast}a_2-e^{-ix \cdot \xi_+}\overline{a_1} a_2^\ast) dx
+\int_\Omega qw_2dx
\\
&=I_1+I_2+I_3,
\end{aligned}
\end{equation}
where $\xi_\pm$ is given by \eqref{eq:def_pm_xi}, and
\begin{align*}
w_2 =&e^{-ix\cdot \xi}(\overline{a_1}r_2+a_2\overline{r_1}+\overline{r_1}r_2)
+e^{-ix^\ast \cdot \xi}(\overline{a_1^\ast} r_2^\ast+a_2^\ast\overline{r_1^\ast}+\overline{r_1^\ast}r_2^\ast)
\\
&-e^{-ix\cdot \xi_-}(\overline{a_1^\ast}r_2+a_2\overline{r_1^\ast}+\overline{r_1^\ast}r_2)
-e^{-ix \cdot \xi_+}(\overline{a_1} r_2^\ast+a_2^\ast\overline{r_1}+\overline{r_1}r_2^\ast).
\end{align*}

Let us now analyze each integral in \eqref{eq:q_cgo_sub} in the limit $h\to 0$. We extend $q$ by zero on $\R^n \setminus \Omega$ and denote the extension by the same letter. For $I_3$, since $q\in \mathcal{Q}(\Omega, M)$, and the amplitudes $a_j\in C^\infty(\overline{\Omega})$, $j=1,2$, we utilize estimate \eqref{eq:est_r_domain} to deduce
\begin{equation}
\label{eq:est_rest}
|I_3|\le Ch, \quad h\to 0.
\end{equation}

Turning attention to $I_1$, by choosing $a_1=a_2=1$ and making a change of variables, we have
\begin{equation}
\label{eq:Fou_q}
I_1
= \int_{\Omega \cup \Omega^\ast}
e^{-ix\cdot \xi}\tilde qdx=\mathcal{F}(\tilde q)(\xi),
\end{equation}
where the function $\tilde q$ is the even extension of $q$ given by \eqref{eq:extension_q}.

We finally estimate $I_2$. To that end, we set $a_1=a_2=1$ again to obtain
\[
I_2=\int_{\Omega \cup \Omega^\ast} q(-e^{-ix\cdot \xi_+}-e^{-ix \cdot \xi_-}) dx=-\mathcal{F}(\tilde{q})(\xi_+)-\mathcal{F}(\tilde{q})(\xi_-).
\] 
Hence, it follows from Lemma \ref{lem:Riemann_Lebesgue} and the definition of $\xi_\pm$ given by \eqref{eq:def_pm_xi} that  there exist constants $C>0$, $\varepsilon_0>0$, and $0<\alpha<1$ such that the estimate 
\begin{equation}
\label{eq:est_I2_q}
|I_2|\le C\big(e^{-\frac{\varepsilon^2}{\pi}\cdot \frac{|\xi'|^2}{h^2|\xi|^2}}+\varepsilon^\alpha\big)
\end{equation}
is valid for any $0<\varepsilon<\varepsilon_0$.

Hence, it follows from \eqref{eq:est_q}--\eqref{eq:est_I2_q} that 
\begin{equation}
\label{eq:est_Fourier_q}
|\mathcal{F}(\tilde q)(\xi)|
\le 
C\left(e^{\frac{9R}{h}} \|A\|_{L^\infty(\Omega)}
+
e^{\frac{9R}{h}} \|\Lambda_{A^{(1)}, q_1}^\Gamma-\Lambda_{A^{(2)}, q_2}^\Gamma\|
+
e^{-\frac{\varepsilon^2}{\pi}\cdot \frac{|\xi'|^2}{h^2|\xi|^2}}
+
\varepsilon^\alpha
+
h\right).
\end{equation}
Furthermore, by substituting estimate \eqref{eq:est_Linfty_A} into \eqref{eq:est_Fourier_q} and applying the equality $s=\frac{n}{2}+2\eta$, we get
\begin{equation}
\label{eq:est_Fourier_q_sub}
\begin{aligned}
|\mathcal{F}(\tilde q)(\xi)|
\le 
&C\left(e^{\frac{20R}{h}} \|\Lambda_{A^{(1)}, q_1}^\Gamma-\Lambda_{A^{(2)}, q_2}^\Gamma\|^{\frac{\eta^2}{(1+s)^2}}
+h^{\frac{2\alpha \eta^2}{(n+2)^2(1+s)^2}}
+
e^{-\frac{\varepsilon^2}{\pi}\cdot \frac{|\xi'|^2}{h^2|\xi|^2}}
+
\varepsilon^\alpha
+
h\right).
\end{aligned}
\end{equation}

We are now ready to establish a stability estimate for $\|\tilde{q}\|_{H^{-1}(\R^n)}$. Let $\rho>0$ be a parameter to be chosen later, and consider again the set $E(\rho)$ given by \eqref{eq:def_Erho}. By the Parseval's formula, we write
\begin{equation}
\label{eq:H-1norm_split_q}
\|\tilde{q}\|_{H^{-1}(\R^n)}^2
\le 
\int_{E(\rho)}\frac{|\cF(\tilde{q})(\xi)|^2}{1+|\xi|^2}d\xi 
+
\int_{\R^n\setminus E(\rho)}\frac{|\cF(\tilde{q})(\xi)|^2}{1+|\xi|^2}d\xi.
\end{equation}
Arguing similarly as in \eqref{eq:est_dA_large_rho}, we get
\begin{equation}
\label{eq:est_q_large_rho}
\int_{\R^n\setminus E(\rho)}\frac{|\cF(\tilde{q})(\xi)|^2}{1+|\xi|^2}d\xi 
\le 
\frac{C}{\rho^2}.
\end{equation}

We next follow similar arguments as in Section 3 to estimate first term on the right-hand side of \eqref{eq:H-1norm_split_q}, and provide the proof for the sake of completeness.  To that end, we have
\begin{equation}
\label{eq:est_q_small_rho_1}
\int_{E(\rho)}\frac{|\cF(\tilde{q})(\xi)|^2}{1+|\xi|^2}d\xi 
\le 
C\rho^n \left(e^{\frac{40R}{h}} \|\Lambda_{A^{(1)}, q_1}^\Gamma-\Lambda_{A^{(2)}, q_2}^\Gamma\|^{\frac{2\eta^2}{(1+s)^2}}
+h^{\frac{4\alpha \eta^2}{(n+2)^2(1+s)^2}}
+
h^{\frac{n-1}{2}}
+
h^{\alpha}
+
h^2\right),
\end{equation}
where we have applied estimate \eqref{eq:est_dA_exp_term} and chosen $\varepsilon>0$ such that $\varepsilon=\sqrt{h}$.

Due to the inequalities $0<h\ll 1$, $\frac{n-1}{2}\ge 1$, $0<\alpha<1$, and $\frac{4 \eta^2}{(n+2)^2(1+s)^2}<1$, we get
\begin{equation}
\label{eq:est_q_small_rho}
\begin{aligned}
&\int_{E(\rho)}\frac{|\cF(\tilde{q})(\xi)|^2}{1+|\xi|^2}d\xi 
&\le  C\rho^n\left(e^{\frac{40R}{h}} \|\Lambda_{A^{(1)}, q_1}^\Gamma-\Lambda_{A^{(2)}, q_2}^\Gamma\|^{\frac{2\eta^2}{(1+s)^2}}
+ 
h^{\frac{4 \alpha \eta^2}{(n+2)^2(1+s)^2}}\right).
\end{aligned}
\end{equation}
Thus, it follows from \eqref{eq:est_q_large_rho} and \eqref{eq:est_q_small_rho} that
\begin{equation}
\label{eq:H-1_norm_q}
\begin{aligned}
\|\tilde{q}\|_{H^{-1}(\R^n)}^2
\le
&C\left(\rho^ne^{\frac{40R}{h}} \|\Lambda_{A^{(1)}, q_1}^\Gamma-\Lambda_{A^{(2)}, q_2}^\Gamma\|^{\frac{2\eta^2}{(1+s)^2}}
+h^{\frac{4\alpha  \eta^2}{(n+2)^2(1+s)^2}}
+\frac{1}{\rho^2}\right). 
\end{aligned}
\end{equation}

By choosing $\rho=h^{-\frac{4 \alpha  \eta^2}{(n+2)^3(1+s)^2}}$, which equates $\rho^nh^{\frac{4 \alpha \eta^2}{(n+2)^2(1+s)^2}}$ and $\frac{1}{\rho^2}$, we get
\begin{align*}
\|\tilde{q}\|_{H^{-1}(\R^n)}^2
\le 
&C\left(h^{\frac{-4n\alpha  \eta^2}{(n+2)^2(1+s)^2}}e^{\frac{40R}{h}} \|\Lambda_{A^{(1)}, q_1}^\Gamma-\Lambda_{A^{(2)}, q_2}^\Gamma\|^{\frac{2\eta^2}{(1+s)^2}}
+h^{\frac{8 \alpha \eta^2}{(n+2)^3(1+s)^2}}\right).
\end{align*}
Since $\frac{4n\alpha  \eta^2}{(n+2)^2(1+s)^2}<4$, we have
\begin{equation}
\label{eq:est_q_tilde}
\begin{aligned}
\|\tilde{q}\|_{H^{-1}(\R^n)}
&\le 
C\left(e^{\frac{22R}{h}}\|\Lambda_{A^{(1)}, q_1}^\Gamma-\Lambda_{A^{(2)}, q_2}^\Gamma\|^{\frac{\eta^2}{(1+s)^2}}
+
h^{\frac{4 \alpha \eta^2}{(n+2)^3(1+s)^2}}\right)
\\
&\le 
C\left(e^{\frac{22R}{h^{1/\alpha}}}\|\Lambda_{A^{(1)}, q_1}^\Gamma-\Lambda_{A^{(2)}, q_2}^\Gamma\|^{\frac{\eta^2}{(1+s)^2}}
+
h^{\frac{4 \alpha \eta^2}{(n+2)^3(1+s)^2}}\right),
\end{aligned}
\end{equation}
where we have utilized the inequality $\frac{1}{h}< \frac{1}{h^{1/\alpha}}$ in the last step.

Let $\tilde h_0=\min\{h_0,\varepsilon_0^2\}$ and $\delta = (e^{-\frac{22R}{\tilde h _0^{1/\alpha}}})^{\frac{2(1+s)^2}{\eta^2}}$. We assume first that $\|\Lambda_{A^{(1)}, q_1}^\Gamma-\Lambda_{A^{(2)}, q_2}^\Gamma\| < \delta$. We then choose
\[
h= \bigg\{\frac{1}{22R} \big|\log \|\Lambda_{A^{(1)}, q_1}^\Gamma-\Lambda_{A^{(2)}, q_2}^\Gamma\|\big|\bigg\}^{-\frac{\alpha \eta^2}{2(1+s)^2}}.
\]
Arguing similarly as in the end of Section 3, we conclude that this choice of $h$ satisfies the inequalities $h<h_0$ and $1-\frac{h^2}{4}|\xi|^2>0$ for $\xi=(\xi', \xi_n)\in \R^n$ such that $|\xi'|<\rho$ and $|\xi_n|<\rho$. 

We then substitute this choice of $h$ into \eqref{eq:est_q_tilde} to obtain
\[
\|\tilde{q}\|_{H^{-1}(\R^n)}\le  
C\left(\|\Lambda_{A^{(1)}, q_1}^\Gamma-\Lambda_{A^{(2)}, q_2}^\Gamma\|^{\frac{\eta^2}{(1+s)^2}}
+
\log \big|\|\Lambda_{A^{(1)}, q_1}^\Gamma-\Lambda_{A^{(2)}, q_2}^\Gamma\|\big|^{-\frac{2\alpha^2\eta^4}{(n+2)^3(1+s)^4}}\right).
\]
From here, the claimed estimate \eqref{eq:estimate_q_hyp} follows from the estimate above and the inequality
\[
\|q\|_{H^{-1}(\Omega)}\le \|\tilde{q}\|_{H^{-1}(\R^n)}.
\]

When $\|\Lambda_{A^{(1)}, q_1}^\Gamma-\Lambda_{A^{(2)}, q_2}^\Gamma\| \ge \delta$, we utilize the continuous inclusions
\[
L^\infty (\Omega) \hookrightarrow L^2(\Omega) \hookrightarrow H^{-1}(\Omega)
\]
to obtain
\[
\|q\|_{H^{-1}(\Omega)}
\le 
\|q\|_{L^\infty(\Omega)} 
\le 
2M
\le 
\frac{2CM}{\delta^{\frac{\eta^2}{2(1+s)^2}}}\delta^{\frac{\eta^2}{2(1+s)^2}}\le \frac{2CM}{\delta^{\frac{\eta^2}{2(1+s)^2}}}\|\Lambda_{A^{(1)}, q_1}^\Gamma-\Lambda_{A^{(2)}, q_2}^\Gamma\|^{\frac{\eta^2}{2(1+s)^2}}.
\]
This completes the proof of Theorem \ref{thm:estimate_q_hyp}.

Finally,  we observe that estimate \eqref{eq:estimate_q_Linfty} follows from  estimate \eqref{eq:estimate_q_hyp} via similar computations as in estimate \eqref{eq:est_dA_Linfty}. This completes the proof of Corollary \ref{cor:estimate_q_Linfty}.

\bibliographystyle{abbrv}
\bibliography{bibliography_biharmonic}

\end{document}